\newtheorem{theorem}{Theorem}[section]
\newtheorem{lemma}[theorem]{Lemma}
\newtheorem{prop}[theorem]{Proposition}
\newtheorem{ex}[theorem]{Example}
\theoremstyle{definition}
\newtheorem{defn}[theorem]{Definition}
\newtheorem*{notation}{Notation}
\newtheorem*{rmk}{Remark}
\numberwithin{equation}{section}
\newcommand{\Q}{\mathbb Q}
\newcommand{\R}{\mathbb R}
\newcommand{\F}{\mathbb F}
\newcommand{\C}{\mathbb C}
\newcommand{\Z}{\mathbb Z}
\begin{document}
\baselineskip=17pt 
\title{Counting points over finite fields and hypergeometric functions}
\author{Adriana Salerno}
\address{Bates College\\
3 Andrews Road\\
Lewiston, ME 04240\\ 
U.S.A.}
\email{asalerno@bates.edu}

\begin{abstract} It is a well known result that the number of points over a finite field on the Legendre family of elliptic curves can be written in terms of a hypergeometric function modulo $p$. In this paper, we extend this result, due to Igusa, to a family of monomial deformations of a diagonal hypersurface. We find explicit relationships between the number of points and generalized hypergeometric functions as well as their  finite field analogues. 
\end{abstract}

\keywords{counting rational points over a finite field, hypergeometric functions}
\subjclass[2010]{11G25, 33C20, 14G05 }

\maketitle

\section{Introduction}

For each $\lambda\in\mathbb{P}^1-\{0,1,\infty\}$ we can define an elliptic curve

\[E_{\lambda}:y^2=x(x-1)(x-\lambda).\]

These form the so-called Legendre family. There is a classical result by Igusa \cite{igusa} that states that for $\lambda \in \Z$ the number of $\mathbb{F}_p$-points on these curves, $N_{\F_p}(\lambda)$, is a hypergeometric function of the parameter $\lambda$ (modulo $p$). In fact, a simple computation (cf. \cite{clemens}) shows that

 \[N_{\F_p}(\lambda)\equiv(-1)^{(p+1)/2}\sum_{r=0}^{\frac{p-1}{2}}{-1/2\choose r}^2\lambda^r\bmod p\]
 \[\equiv(-1)^{(p-1)/2}\sum_{r=0}^{\frac{p-1}{2}}\frac{(1/2)_r(1/2)_r}{r!r!}\lambda^r\bmod p,\] where the last sum is the hypergeometric function $$ {}_2F_1\left(\left.\frac{1}{2},\frac{1}{2};1\right|\lambda\right)$$ truncated at $\frac{p-1}{2}$. 

We believe we can find results like Igusa's in general, that is, that hypergeometric functions should appear in some capacity when counting $\F_q$-rational points.  In this paper, we study the relationship between the number of $\F_q$-rational points and hypergeometric functions for a family of \emph{monomial deformations of diagonal hypersurfaces}. These families are of the form:

\begin{equation}
X_{\lambda}:x_1^d+\cdots+x_n^d-d\lambda x_1^{h_1}\cdots x_n^{h_n}=0
\label{E:monomial}
\end{equation} where $\sum h_i=d, \text{g.c.d.}(d,h_1,\dots,h_n)=1$. For $\lambda \in \Z$, let $N_{\F_q}(\lambda)$ denote the number of points on the hypersurface in $\mathbb{P}^{n-1}_{\F_q}$.

 We have explored the relationship between $N_{\F_q}(\lambda)$ and hypergeometric functions in two ways. The basis for our approach in both cases is an important result by Koblitz \cite{koblitz} (Theorem \ref{T:koblitz} in this paper). 

First, we use Koblitz's formula to relate $N_{\F_q}(\lambda)$ to the finite field version of a hypergeometric function as defined by Katz \cite{katz:esde}. In this case, we see that the number of $\F_q$-points cannot be written in terms of a single hypergeometric function, but as a sum of several hypergeometric functions. This is the content of Theorem \ref{T:ff}. 

We will then use Koblit's result and the Gross-Koblitz formula to find an explicit relationship between $N_{\F_p}(\lambda)$ (restricting our attention to fields of prime order) and generalized hypergeometric functions in some special cases. 
The first case is a zero-dimensional variety, and the surprising result (Theorem \ref{T:mine}) is that even in this simple case there are many hypergeometric functions that appear. In the second case, we look at a known computation, the famous Dwork family, using our methods and see that it behaves much more like the Legendre family. 

It is worth noting that there are many other ways in which one can approach this problem. For example, in \cite{kloost}, Kloosterman computes the Zeta function (and in consequence the generating function for $N_{\F_q}(\lambda)$) using the geometry of these hypersurfaces, and finds a relationship with hypergeometric functions. This geometric approach was also used, to a certain extent,  in   \cite{salerno}. Lennon, in her thesis \cite{lennon}, related elliptic curves to Greene's finite field version of hypergeometric functions. This elliptic curves approach has led to the study of so-called hypergeometric modular forms, as in Ono and Mahlburg's work \cite{ono}. 

Our goal with our particular  approach was to emulate the simple computation of Igusa's to find this relationship, and we were surprised to find how much more difficult the calculation becomes when going outside of the Dwork family and the Legendre family examples.

\section{Background}
The series

\[\sum_{k\geq 0} \frac{(a)_k(b)_k}{(c)_k}\frac{z^k}{k!},\] where we use the Pochhammer notation

\[(x)_k=x(x+1)\cdots(x+k-1)=\frac{\Gamma(x+k)}{\Gamma(x)},\] is called the \emph{Gauss hypergeometric function}.

    Many variants of the definition of a hypergeometric function have arisen since Gauss first defined it, a few of which will be used throughout this work. In this section, we will introduce three versions of this function and present some of their most important features and properties.

\subsection{The generalized hypergeometric function}

The most classical definition is the extension of Gauss's hypergeometric function, with notation due to Barnes, c.f. \cite{slater}.

\begin{defn}

Let, $A,B\in\Z$ and $\alpha_1,\dots,\alpha_A,\beta_1,\dots,\beta_B\in\Q$, with all of the $\beta_i\geq0$. The \emph{generalized hypergeometric function} is defined as the series (taking $z\in\C$)

\begin{equation}{}_AF_B(\alpha_1,\dots,\alpha_A;\beta_1,\dots,\beta_B|z)=\sum_{k=0}^{\infty}\frac{(\alpha_1)_k\cdots(\alpha_A)_kz^k}{(\beta_1)_k\cdots(\beta_B)_kk!}.\label{E:hyperg}\end{equation}

\end{defn}

The $\alpha_i$ will be referred to as ``numerator parameters'' and the $\beta_i$ as ``denominator parameters''.

Notice that in this notation Gauss's hypergeometric function becomes ${}_2F_1(\alpha_1,\alpha_2;\beta_1|z)$.

Sometimes we will use the shortened notation $${}_AF_B(\alpha;\beta|z)={}_AF_B(\alpha_1,\dots,\alpha_A;\beta_1,\dots,\beta_B|z).$$

\subsection{Hypergeometric weight systems}\label{S:weight}

One can think of (\ref{E:hyperg}) in terms of ratios of factorials (rather than Pochhammer symbols). In \cite{frv}, Rodr\'{i}guez-Villegas defines a \emph{hypergeometric weight system} as a formal linear combination

\[\gamma=\sum_{\nu\geq1}\gamma_{\nu}[\nu], \hspace{1cm} \nu\in\Z,\] where the $\gamma_{\nu}\in\Z$ are zero for all but finitely many $\nu$, satisfying the following conditions:
\begin{enumerate}
\item $\sum_{\nu\geq1}\nu\gamma_{\nu}=0$
\item $d=d(\gamma):=-\sum_{\nu\geq1}\gamma_{\nu}>0$
\end{enumerate}

To $\gamma$ we can associate the formal power series
\[u(\lambda):=\sum_{n\geq0}u_n\lambda^n\]
\noindent where
\[u_n=\prod_{\nu\geq1}(\nu n)!^{\gamma_{\nu}}.\]

\begin{lemma}[Rodr\'{i}guez-Villegas] $u$ is a hypergeometric function, that is, for some minimal $r$ we have
\[u(\lambda)={}_rF_{r-1}\left(\alpha_1,\dots,\alpha_r;\beta_1,\dots,\beta_{r-1}\left|\dfrac{\lambda}{\lambda_0}\right.\right)\]
where $\lambda_0^{-1}=\prod_{\nu\geq1}\nu^{\nu\gamma_{\nu}}$ and $\alpha_1,\dots,\alpha_r,\beta_1,\dots,\beta_{r-1}$ are rational numbers.
\end{lemma}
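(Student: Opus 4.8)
The plan is to show that the formal power series $u(\lambda)=\sum_{n\geq 0}u_n\lambda^n$ with $u_n=\prod_{\nu\geq 1}(\nu n)!^{\gamma_\nu}$ is hypergeometric by examining the ratio of consecutive coefficients. The defining feature of a generalized hypergeometric series ${}_rF_{r-1}$ is precisely that the ratio $u_{n+1}/u_n$ is a rational function of $n$; conversely, any power series whose coefficient ratio is rational in $n$ (with $u_0=1$) is of hypergeometric type, and reading off the roots of the numerator and denominator polynomials recovers the parameters. So the whole statement reduces to computing this ratio explicitly and factoring it.

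First I would compute
\[
\frac{u_{n+1}}{u_n}=\prod_{\nu\geq 1}\frac{(\nu(n+1))!^{\gamma_\nu}}{(\nu n)!^{\gamma_\nu}}
=\prod_{\nu\geq 1}\Bigl[(\nu n+1)(\nu n+2)\cdots(\nu n+\nu)\Bigr]^{\gamma_\nu}.
\]
Next I would normalize each factor by pulling out $\nu^\nu$: writing $\nu n+j=\nu\bigl(n+\tfrac{j}{\nu}\bigr)$ for $j=1,\dots,\nu$, the block $(\nu n+1)\cdots(\nu n+\nu)$ becomes $\nu^\nu\prod_{j=1}^{\nu}\bigl(n+\tfrac{j}{\nu}\bigr)$. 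Raising to $\gamma_\nu$ and taking the product over $\nu$, the scalar part assembles into $\prod_{\nu}\nu^{\nu\gamma_\nu}=\lambda_0^{-1}$, which is exactly the quantity appearing in the statement, while the remaining part is a product of linear factors $\bigl(n+\tfrac{j}{\nu}\bigr)$ raised to integer powers $\gamma_\nu$. Grouping the factors with positive $\gamma_\nu$ (where condition (2) guarantees $d>0$ of them live in the numerator after accounting for signs) against those with negative $\gamma_\nu$ and the implicit $k!$ of the hypergeometric normalization, I would obtain
\[
\frac{u_{n+1}}{u_n}=\frac{1}{\lambda_0}\cdot\frac{\prod_{i}(n+\alpha_i)}{\prod_{j}(n+\beta_j)},
\]
where the $\alpha_i$ and $\beta_j$ are the rational numbers $j/\nu$ collected from the positive- and negative-weight blocks respectively. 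Comparing this with the coefficient ratio of ${}_rF_{r-1}(\alpha;\beta|\mu)$, namely $\frac{a_{n+1}}{a_n}=\frac{\prod_i(n+\alpha_i)}{\prod_j(n+\beta_j)(n+1)}\mu$, identifies $\mu=\lambda/\lambda_0$ and reads off the parameter lists, with the factor $(n+1)$ from $k!$ supplying one denominator parameter equal to $1$ and fixing $r-1$ denominator parameters against $r$ numerator parameters.

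The main obstacle is bookkeeping rather than conceptual: I must verify that after cancellation the numerator and denominator have the same degree up to the single $k!$ factor, which is exactly what condition (1), $\sum_\nu \nu\gamma_\nu=0$, guarantees — it forces the total count of numerator linear factors to match that of the denominator so that $u_{n+1}/u_n$ is a genuine rational function of $n$ of balanced degree, making the series of type ${}_rF_{r-1}$. One must also take care that $\gamma_\nu$ can be negative, so a block $\bigl(n+\tfrac{j}{\nu}\bigr)^{\gamma_\nu}$ may contribute to the denominator instead; the careful sign-and-side accounting of these blocks, together with confirming that the resulting $\alpha_i,\beta_j\in\Q$ lie in $(0,1]$ and that $r$ is minimal once common parameters are cancelled, is the one genuinely fiddly point. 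Finally, I would note $\beta=1$ always appears from $k!$ and confirm $u_0=1$, completing the identification.
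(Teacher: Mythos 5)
Your proposal is correct, and it is exactly the standard argument: the paper itself states this lemma without proof (it is quoted from the cited work of Rodr\'{i}guez-Villegas, where the same consecutive-coefficient-ratio computation, $u_{n+1}/u_n=\lambda_0^{-1}\prod_{\nu}\prod_{j=1}^{\nu}\left(n+\tfrac{j}{\nu}\right)^{\gamma_{\nu}}$, is the proof). One phrasing slip worth fixing: condition (2), $d=-\sum_{\nu}\gamma_{\nu}>0$, does not put factors ``in the numerator''; rather, since $j/\nu=1$ exactly when $j=\nu$, it guarantees that $(n+1)$ occurs in the denominator with multiplicity $d\geq 1$, supplying the implicit $k!$ together with $d-1$ denominator parameters equal to $1$, while it is condition (1), $\sum_{\nu}\nu\gamma_{\nu}=0$, that balances the numerator and denominator degrees --- a role you do assign correctly later in your write-up.
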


Thus, we can think of a hypergeometric function as being associated to a hypergeometric weight system and viceversa, provided that certain conditions are satisfied.

There is a useful function associated to a hypergeometric weight system which we will now define. 

\begin{defn}
The \emph{Landau function} associated to $\gamma$ is defined by

\[\mathcal{L}(x)=\mathcal{L}_{\gamma}(x):=-\sum_{\nu\geq 1}\gamma_{\nu}\{\nu x\}, \hspace{1cm} x\in\R\] where $\{x\}$ denotes the fractional part of $x$. This function is periodic of period 1.

\end{defn}

The Landau function is useful for checking whether the coefficients of the series $u(z)$ are integers.

\begin{prop}[Landau]
$u_n\in\Z$ for all $n\geq0$ if and only if $\mathcal{L}(x)\geq0$ for all $x\in\R$.
\end{prop}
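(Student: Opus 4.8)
The plan is to translate the integrality question into a statement about $p$-adic valuations and then read everything off the Landau function. Since each $u_n=\prod_{\nu\geq1}(\nu n)!^{\gamma_\nu}$ is a positive rational number (a finite product of factorials raised to the integer exponents $\gamma_\nu$), the assertion ``$u_n\in\Z$ for all $n$'' is equivalent to ``$v_p(u_n)\geq0$ for every prime $p$ and every $n\geq0$'', where $v_p$ denotes the $p$-adic valuation. So the whole proposition will drop out once I have a clean formula for $v_p(u_n)$ in terms of $\mathcal{L}$.

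First I would apply Legendre's formula $v_p(m!)=\sum_{i\geq1}\lfloor m/p^i\rfloor$, substitute, and interchange the (finite) sums to get
\[v_p(u_n)=\sum_{\nu\geq1}\gamma_\nu\sum_{i\geq1}\left\lfloor\frac{\nu n}{p^i}\right\rfloor=\sum_{i\geq1}\sum_{\nu\geq1}\gamma_\nu\left\lfloor\frac{\nu n}{p^i}\right\rfloor.\]
Writing $\lfloor y\rfloor=y-\{y\}$ and invoking condition (1), namely $\sum_\nu\nu\gamma_\nu=0$, the linear parts cancel for each fixed $i$, leaving
\[\sum_{\nu\geq1}\gamma_\nu\left\lfloor\frac{\nu n}{p^i}\right\rfloor=-\sum_{\nu\geq1}\gamma_\nu\left\{\frac{\nu n}{p^i}\right\}=\mathcal{L}\!\left(\frac{n}{p^i}\right).\]
This yields the master identity $v_p(u_n)=\sum_{i\geq1}\mathcal{L}(n/p^i)$, a finite sum since $\mathcal{L}(n/p^i)=0$ once $p^i$ exceeds $n\cdot\max\{\nu:\gamma_\nu\neq0\}$. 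The easy direction is then immediate: if $\mathcal{L}\geq0$ everywhere, every summand is nonnegative, so $v_p(u_n)\geq0$ for all $p$ and $n$, whence $u_n\in\Z$.

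For the converse I would argue by contraposition and exploit the structure of $\mathcal{L}$. The key observation is that condition (1) forces $\mathcal{L}$ to be piecewise constant: away from the finitely many breakpoints $k/\nu$, the slope of $-\sum_\nu\gamma_\nu\{\nu x\}$ is $-\sum_\nu\nu\gamma_\nu=0$, and since $\gamma_\nu\in\Z$ with $\mathcal{L}(0)=0$ the function is integer-valued. Hence, if $\mathcal{L}(x_0)<0$ for some $x_0$, then $\mathcal{L}\equiv-c\leq-1$ on an entire open subinterval $(a,b)\subset(0,1)$ of positive length. I would then pick a large prime $p$ and an integer $n$ with $a<n/p<b$; such an $n$ exists as soon as $(b-a)p>1$, and for $p$ larger than every relevant $\nu$ the point $n/p$ cannot be a breakpoint (else $p\mid\nu n$ with $0<n<p$, impossible), so $\mathcal{L}(n/p)=-c$. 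For this same $n<p$ and all $i\geq2$ one has $\nu n/p^i<1$ for every relevant $\nu$, so $\mathcal{L}(n/p^i)=0$. The master identity then gives $v_p(u_n)=\mathcal{L}(n/p)=-c<0$, so $u_n\notin\Z$, completing the contrapositive.

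The main obstacle is the converse direction, specifically isolating a single term of the sum $\sum_{i\geq1}\mathcal{L}(n/p^i)$, since a priori negative and positive contributions could cancel. The two ingredients that resolve this are (i) recognizing, via condition (1), that $\mathcal{L}$ is a step function whose negativity occurs on a full interval rather than at isolated points, and (ii) choosing a sufficiently large prime so that only the $i=1$ term survives. Everything else is the routine valuation bookkeeping sketched above.
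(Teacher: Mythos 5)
Your proof is correct and follows essentially the same route as the paper: the identity $v_p(u_n)=\sum_{i\geq 1}\mathcal{L}\left(n/p^i\right)$ that you derive via Legendre's formula and condition (1) is exactly the lemma the paper singles out as the crucial step of Landau's proof. Your completion of the converse (using that $\mathcal{L}$ is a step function and choosing a large prime so only the $i=1$ term survives) is a sound filling-in of details the paper leaves to the reference.
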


We want to point out a crucial step of the proof because it will be used later.

\begin{lemma}
Let $p$ be a prime and let $v_p(x)$ denote the $p$-adic valuation of $x$.

 \[v_p(u_n)=\sum_{k\geq 1}\mathcal{L}\left(\frac{n}{p^k}\right)\]

 \end{lemma}

So the Landau function encodes information about the $p$-adic valuation of the coefficients of the series.

This function has many other properties as listed in \cite{frv}. Here we list a few which will be useful in some of our computations later on.

\begin{prop}
\begin{enumerate}

\item $\mathcal{L}$ is right continuous with discontinuity points exactly at $x\equiv\alpha_i\bmod 1$ or $x\equiv\beta_i\bmod 1$ for some $i=1,\dots,r$.
 More precisely,

\[\mathcal{L}=\#\{j|\alpha_i\leq x\}-\#\{j|0<\beta_j\leq x\}.\]

\item $\mathcal{L}$ takes only integer values.

\item Away from the discontinuity points of $\mathcal{L}$ we have

\[\mathcal{L}(-x)=d-\mathcal{L}(x)\]

\noindent and, in particular, for all $x$

\[\mathcal{L}(x)\leq d,\hspace{1cm} \text{if $u_n\in\Z$ for all $n$}.\]

\end{enumerate}
\label{P:Landau}
\end{prop}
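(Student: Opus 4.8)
The plan is to exploit the piecewise-constant structure of $\mathcal{L}$ that is forced by condition (1) of the weight system. The first observation is that the fractional part $\{y\}$ is right continuous and increases with unit slope except for a downward jump of $1$ at each integer $y$. Hence $\mathcal{L}(x)=-\sum_{\nu\geq1}\gamma_\nu\{\nu x\}$ is right continuous with local slope $-\sum_\nu\nu\gamma_\nu$, which vanishes by condition (1). Thus $\mathcal{L}$ is a right-continuous step function, and its only possible discontinuities are the points where some $\{\nu x\}$ jumps, namely $x\in\frac1\nu\Z$ for $\nu$ with $\gamma_\nu\neq0$. At such a point $x_0$ the jump of $\mathcal{L}$ equals $\sum_{\nu:\,\nu x_0\in\Z}\gamma_\nu$; writing $x_0=a/b$ in lowest terms this is $\sum_{b\mid\nu}\gamma_\nu$, since $\nu x_0\in\Z$ iff $b\mid\nu$.

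Next I would match these jumps with the numerator and denominator parameters. Computing the ratio of consecutive coefficients of $u$ gives
\[
\frac{u_{n+1}}{u_n}=\prod_{\nu\geq1}\Bigl[\prod_{j=1}^\nu(\nu n+j)\Bigr]^{\gamma_\nu}=\Bigl(\prod_\nu\nu^{\nu\gamma_\nu}\Bigr)\prod_\nu\prod_{j=1}^\nu\bigl(n+\tfrac{j}{\nu}\bigr)^{\gamma_\nu},
\]
so that, after reducing the fractions $j/\nu$ modulo $1$ and cancelling common factors, the numerator parameters $\alpha_i$ are exactly the values $j/\nu$ with $\gamma_\nu>0$ and the denominator parameters $\beta_j$ (together with the $1$ coming from $k!$) are the values with $\gamma_\nu<0$. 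For $x_0=a/b\in(0,1)$ each such value equals $x_0$ precisely once for every $\nu$ that is a multiple of $b$, so the net number of $\alpha_i$ equal to $x_0$ minus the net number of $\beta_j$ equal to $x_0$ is again $\sum_{b\mid\nu}\gamma_\nu$. This is exactly the jump computed above. Since $\mathcal{L}(0)=0$ and $\mathcal{L}$ is recovered from its jumps as $x$ increases from $0$, summing the jumps yields the closed form $\mathcal{L}(x)=\#\{i:\alpha_i\leq x\}-\#\{j:0<\beta_j\leq x\}$, which is part (1); the conventions $\alpha_i\leq x$ and $0<\beta_j$ encode right continuity and the cancellation of the factor at $x\equiv0$. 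Part (2) is then immediate: every jump equals $\sum_{b\mid\nu}\gamma_\nu\in\Z$ and $\mathcal{L}(0)=0$, so $\mathcal{L}$ takes only integer values (equivalently, the counting formula is a difference of cardinalities).

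For part (3) I would use the elementary identity $\{y\}+\{-y\}=1$, valid whenever $y\notin\Z$. If $x$ is not a discontinuity point then $\nu x\notin\Z$ for every $\nu$ with $\gamma_\nu\neq0$, hence
\[
\mathcal{L}(x)+\mathcal{L}(-x)=-\sum_{\nu\geq1}\gamma_\nu\bigl(\{\nu x\}+\{-\nu x\}\bigr)=-\sum_{\nu\geq1}\gamma_\nu=d,
\]
giving $\mathcal{L}(-x)=d-\mathcal{L}(x)$. Finally, assuming $u_n\in\Z$ for all $n$, Landau's proposition gives $\mathcal{L}\geq0$ everywhere; applying this at $-x$ and using the reflection identity yields $\mathcal{L}(x)=d-\mathcal{L}(-x)\leq d$ away from the discontinuities, and right continuity extends the bound to every $x$.

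I expect the main obstacle to be the bookkeeping in the second paragraph: making the correspondence between the $\gamma_\nu$ and the reduced parameters $\alpha_i,\beta_j$ fully precise, in particular tracking the extra factor coming from $k!$ and justifying the endpoint conventions at $x\equiv0$ so that both the counting formula and the stated discontinuity set come out exactly right. The slope and reflection arguments are robust, but it is in this matching that an off-by-one or a miscounted cancellation could creep in.
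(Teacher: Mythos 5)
Your proof is correct. Note that the paper itself offers no proof of this proposition: it is quoted as a list of properties from Rodr\'{i}guez-Villegas \cite{frv}, so there is no in-paper argument to compare against. Your argument is the natural (and essentially the standard) one: condition (1) of the weight system kills the slope $-\sum_\nu \nu\gamma_\nu$, so $\mathcal{L}$ is a right-continuous step function; the jump at $x_0=a/b$ (in lowest terms, $0<x_0<1$) is $\sum_{b\mid\nu}\gamma_\nu$; and the ratio $u_{n+1}/u_n=\lambda_0^{-1}\prod_\nu\prod_{j=1}^{\nu}(n+j/\nu)^{\gamma_\nu}$ identifies the multiset of values $j/\nu$ (signed by $\gamma_\nu$) with the numerator and denominator parameters, which makes the jump equal to the net change in $\#\{i\mid\alpha_i\leq x\}-\#\{j\mid 0<\beta_j\leq x\}$. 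In doing this you are in effect re-deriving the Rodr\'{i}guez-Villegas lemma stated just before the proposition; since that lemma is available to you, you could also simply invoke it for the parameter identification. Two small points deserve the care you flag: first, ``exactly at $\alpha_i$ or $\beta_j$'' requires minimality of $r$ (after cancellation no value occurs as both an $\alpha$ and a $\beta$, so every surviving parameter really is a jump point, and conversely a vanishing net count means the value cancels out of the parameter list); second, the point $x\equiv 0\bmod 1$ carries the jump $\sum_\nu\gamma_\nu=-d$, absorbed by the $k!$ factor together with the $\beta_j$'s equal to $1$, which is exactly the endpoint convention you mention. Your parts (2) and (3) -- integrality from integer jumps and $\mathcal{L}(0)=0$, the reflection identity from $\{y\}+\{-y\}=1$ for $y\notin\Z$, and the bound $\mathcal{L}\leq d$ from Landau's criterion applied at $-x$ plus right continuity -- are all sound.
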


\subsection{A finite field analog}\label{S:ff}

First, we establish notation, following \cite{koblitz}. Let $\chi_{1/(q-1)}:\F_q^*\rightarrow K^*$ be a fixed generator of the character group of $\F_q^*$, where $K$ is an algebraically closed field of characteristic zero (such as $\C$ or $\C_p$).

\begin{ex}
\begin{enumerate}
 \item If $K=\C$ fix a primitive root of $\F_q^*$ and define $\chi_{1/(q-1)}$ by taking that root to $e^{2\pi i/(q-1)}$.
\item If $K=\C_p$ we can take $\chi_{1/(q-1)}$ to be the Teichm\"{u}ller character. Recall that $\omega: \F_q^*\rightarrow \C_p^*$ is the Teichm\"{u}ller character where $\omega(x)$ is defined as the unique element of $\C_p^*$ which is a $(q-1)$-st root of unity and such that $\omega(x)\equiv x\bmod p$.

\end{enumerate}
\end{ex}

For $s \in \frac{1}{q-1}\Z/\Z$ we let $\chi_s=\left(\chi_{1/(q-1)}\right)^{s(q-1)}$, and for any $s$ set $\chi_s(0)=0$. Let $\psi:\F_q\rightarrow K^*$ be a (fixed) additive character.

\begin{defn}

For $s\in\frac{1}{(q-1)}\Z/\Z$ we let $g(s)$ denote the Gauss sum

\[g(s)=\sum_{x\in\F_q}\chi_s(x)\psi(x)\]
\end{defn}

\begin{lemma}
 Gauss sums satisfy the following properties:
\begin{enumerate}
 \item $g(s)g(-s)=q\chi_s(-1)$ if $s\neq0$, and $g(0)=-1$.
\item If $d|q-1$,
\[\prod_{j=0}^{d-1}g\left(s+\frac{j}{d}\right)=\chi_{-ds}(d)g(ds)\prod_{j=1}^{d-1}g\left(\frac{j}{d}\right)\]
\end{enumerate}
\label{L:Gauss}
\end{lemma}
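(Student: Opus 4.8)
The plan is to prove the two Gauss sum identities in Lemma \ref{L:Gauss} directly from the definition $g(s)=\sum_{x\in\F_q}\chi_s(x)\psi(x)$, using only the basic orthogonality relations for characters and the multiplicativity of $\chi_s$. For part (1), the case $s=0$ is immediate since $\chi_0$ is the trivial multiplicative character with the convention $\chi_0(0)=0$, so $g(0)=\sum_{x\in\F_q^*}\psi(x)=-1$ because the additive character sums to zero over all of $\F_q$ and $\psi(0)=1$. For $s\neq 0$, I would expand the product
\[
g(s)g(-s)=\sum_{x,y\in\F_q^*}\chi_s(x)\chi_{-s}(y)\psi(x+y)
\]
and substitute $x=ty$ to obtain $\sum_{t}\chi_s(t)\sum_{y}\psi(y(t+1))$. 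The inner sum over $y\in\F_q^*$ is $q-1$ when $t=-1$ and $-1$ otherwise; separating the $t=-1$ term gives $(q-1)\chi_s(-1)$, and the remaining sum $-\sum_{t\neq -1}\chi_s(t)$ is $\chi_s(-1)$ by orthogonality since $s\neq 0$. Combining yields $g(s)g(-s)=q\chi_s(-1)$.

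For part (2), the Hasse–Davenport product relation, the cleanest route is to introduce the product $P(s)=\prod_{j=0}^{d-1}g\!\left(s+\frac{j}{d}\right)$ and to track how each side transforms under the substitution $s\mapsto s+\tfrac{1}{d}$. Both sides should satisfy the same first-order functional equation in $s$, reducing the identity to a single normalization. An alternative I would keep in reserve is a direct character-sum manipulation: expand the full product as a sum over $(x_0,\dots,x_{d-1})$, perform the change of variables collecting the product $\prod x_j$ and exploiting that $\prod_{j}\chi_{1/d}(x_j)$ relates to $\chi_s(\prod x_j)$, then isolate the diagonal contribution. The key structural input is that the characters $\chi_{s+j/d}$ for $j=0,\dots,d-1$ are precisely the characters whose $d$-th power is $\chi_{ds}$, so their product over $\F_q^*$ can be repackaged via the norm/multiplication map $x\mapsto x^d$.

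The main obstacle will be part (2): unlike part (1), the Hasse–Davenport relation does not fall out of a one-line orthogonality argument, and one must carefully handle the appearance of the factor $\chi_{-ds}(d)$ together with the ``principal'' Gauss sum product $\prod_{j=1}^{d-1}g(j/d)$. Getting the normalizing constant $\chi_{-ds}(d)$ exactly right requires tracking how the multiplication-by-$d$ map interacts with the additive character $\psi$ — specifically, how $\psi(dx)$ versus $\psi(x)$ shifts the Gauss sum — and this is where sign and character-twist errors most easily creep in. Since this is a standard result, in practice I would cite the Hasse–Davenport product formula (as stated in Koblitz \cite{koblitz}) rather than reproduce the full bookkeeping, and verify only that the stated normalization matches our conventions for $\chi_s$ and $\psi$.
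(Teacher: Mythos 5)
Your proposal is sound, but note that the paper itself gives no proof of this lemma at all: it simply points the reader to Ireland and Rosen \cite{ireland}. So anything you prove is already more than the paper does. Your part (1) argument is complete and correct: the substitution $x=ty$ collapses $\chi_s(x)\chi_{-s}(y)$ to $\chi_s(t)$, the inner additive sum over $y\in\F_q^*$ is $q-1$ at $t=-1$ and $-1$ elsewhere, and orthogonality of $\chi_s$ (valid precisely because $s\neq 0$) turns $-\sum_{t\neq-1}\chi_s(t)$ into $\chi_s(-1)$, giving $g(s)g(-s)=q\chi_s(-1)$; the evaluation $g(0)=-1$ from $\sum_{x\in\F_q}\psi(x)=0$ is also right. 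For part (2), your ultimate plan --- cite the Hasse--Davenport product relation in the normalization of \cite{koblitz} --- is exactly parallel to what the paper does, so there is no gap in the proposal as submitted. One caution about your ``reserve'' functional-equation sketch, though: it would not work as stated. Both sides of the identity are invariant under $s\mapsto s+\frac{1}{d}$ (the left side because the factors permute cyclically, the right side because $g$ is periodic modulo $1$ and $\chi_{-d(s+1/d)}=\chi_{-ds}\chi_{-1}$ with $\chi_{-1}$ the trivial character), so that functional equation is satisfied by both sides automatically and cannot pin down the identity; cyclic invariance alone does not determine a function on $\frac{1}{q-1}\Z/\Z$ up to one normalization. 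A genuine proof needs more, e.g.\ the Hasse--Davenport lifting relation, an induction through Jacobi sums, or the $p$-adic route via Gross--Koblitz, so if you ever wanted to replace the citation by an argument, it is the direct character-sum/norm-map approach (your alternative sketch, using that the $\chi_{s+j/d}$ are exactly the characters whose $d$-th power is $\chi_{ds}$) that you should develop, not the functional-equation one.
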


For a proof of the lemma see, for example, \cite{ireland}. 

\begin{defn}
If $s_1,\dots,s_r\in\dfrac{1}{q-1}\Z/\Z$ and the sum of the $s_i$'s  is not an integer, we define the Jacobi sum

\[J(s_1,\dots,s_r)=\sum_{\substack{x_1,\dots,x_r\in\F_q\\x_1+\cdots+x_r=1}}\chi_{s_1}(x_1)\cdots\chi_{s_r}(x_r), r>1; J(s_1)=1.\]
\end{defn}

Jacobi sums can be expressed in terms of Gauss sums as follows:

\[J(s_1,\dots,s_r)=\frac{g(s_1)\cdots g(s_r)}{g(s_1+\cdots+s_r)}.\]

A finite field analog of the hypergeometric function was defined by Katz \cite{katz:esde} as follows.

\begin{defn}

Let $t\in \F_q^*$. Define the set
\[V_t=\left\{x\in (\F_q^*)^n, y\in (\F_q^*)^m|x_1\cdots x_n=ty_1\cdots y_m\right\}\]

Also, let $\psi:\F_q\to K^*$, be a (fixed) additive character where $K$ is an algebraically closed field (like $\C$ or $\C_p$), let $\chi$ denote, as in the previous section, a generator of the character group of $\F_q^*$, and $\alpha_1,\dots,\alpha_n,\beta_1,\dots,\beta_m\in\dfrac{1}{q-1}\Z/\Z$ so that $\chi_{\alpha_1},\dots,\chi_{\alpha_n},\chi_{\beta_1},\dots,\chi_{\beta_m}:\F_q^*\to K^*$ are multiplicative characters. Then we define the \emph{finite field version of a hypergeometric function} as

\begin{align*}
H(\alpha;\beta|t):&=\sum_{x,y\in V_t}\psi(x_1+\cdots+x_n-(y_1+\cdots+y_m))\chi_{\alpha_1}(x_1)\cdots\chi_{\alpha_n}(x_n)\\
&\cdot\overline{\chi}_{\beta_1}(y_1)\cdots\overline{\chi}_{\beta_m}(y_m)\\
\end{align*}

\end{defn}

It will be convenient to think of this definition in a different form which is given by its Fourier series expansion.

\begin{lemma}
The Fourier series expansion of $H(\alpha;\beta|t)$ is
\[H(\alpha;\beta|t)=\frac{1}{q-1}\sum g(s+\alpha_1)\cdots g(s+\alpha_n)g(-s-\beta_1)\cdots g(-s-\beta_m)\overline{\chi_s}(t)\] where the sum is taken over $s\in\frac{1}{q-1}\Z/\Z$.
\label{L:katz}
\end{lemma}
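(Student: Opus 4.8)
The plan is to linearize the multiplicative constraint $x_1\cdots x_n = t\,y_1\cdots y_m$ that cuts out $V_t$, using the orthogonality relation for the multiplicative characters of $\F_q^*$: for $u\in\F_q^*$,
\[
\frac{1}{q-1}\sum_{s}\chi_s(u)=\begin{cases}1 & u=1,\\ 0 & u\neq 1,\end{cases}
\]
where the sum runs over all $s\in\frac{1}{q-1}\Z/\Z$. Applying this with $u=(x_1\cdots x_n)/(t\,y_1\cdots y_m)$ replaces the condition ``$x,y\in V_t$'' by a free sum over $x\in(\F_q^*)^n$ and $y\in(\F_q^*)^m$, at the cost of an extra sum over $s$. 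Since $\chi_s$ is multiplicative, $\chi_s(u)$ factors as $\prod_i\chi_s(x_i)\cdot\overline{\chi_s}(t)\cdot\prod_j\overline{\chi_s}(y_j)$, which is precisely where the Fourier dual variable $s$ and the factor $\overline{\chi_s}(t)$ enter the formula.

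Next I would substitute this expression for the indicator into the definition of $H(\alpha;\beta|t)$ and interchange the (finite) order of summation, pulling the sum over $s$ together with the factor $\tfrac{1}{q-1}\overline{\chi_s}(t)$ to the front. The remaining sum over the unconstrained $x_i,y_j\in\F_q^*$ then has a summand that is a product of factors each depending on a single variable, so it factors as
\[
H(\alpha;\beta|t)=\frac{1}{q-1}\sum_s\overline{\chi_s}(t)\prod_{i=1}^n\Big(\sum_{x_i\in\F_q^*}\psi(x_i)\chi_{s+\alpha_i}(x_i)\Big)\prod_{j=1}^m\Big(\sum_{y_j\in\F_q^*}\psi(-y_j)\chi_{-s-\beta_j}(y_j)\Big),
\]
where I have used $\chi_{\alpha_i}\chi_s=\chi_{s+\alpha_i}$ and $\overline{\chi}_{\beta_j}\,\overline{\chi_s}=\chi_{-s-\beta_j}$.

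Finally I would identify each inner sum as a Gauss sum. For the numerator variables this is immediate, $\sum_{x_i}\psi(x_i)\chi_{s+\alpha_i}(x_i)=g(s+\alpha_i)$ (the contribution of $x_i=0$ being $0$ since $\chi_s(0)=0$), recovering the factors $g(s+\alpha_i)$. For the denominator variables the additive character carries a sign, so I would make the change of variable $y_j\mapsto -y_j$ to convert $\psi(-y_j)$ into $\psi(y_j)$ and thereby produce $g(-s-\beta_j)$. This last step is the one to handle carefully: the substitution $y_j\mapsto -y_j$ generates evaluations of $\chi_{-s-\beta_j}$ at $-1$, and the correct bookkeeping of these $\chi(-1)$ factors (in accordance with Koblitz's normalization) is the only delicate point; everything else is a routine application of character orthogonality, Fubini for finite sums, and the multiplicativity of the $\chi_s$.
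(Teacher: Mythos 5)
Your strategy is the right one, and in fact there is nothing in the paper to compare it against: the paper states Lemma \ref{L:katz} without proof (it is imported from Katz), and the standard argument is exactly the one you outline --- use orthogonality of the multiplicative characters of $\F_q^*$ to replace the defining condition of $V_t$ by a free sum over $(\F_q^*)^n\times(\F_q^*)^m$, factor the resulting sum variable by variable, and recognize each one-variable sum as a Gauss sum. The orthogonality step, the factorization, and the identification of the numerator factors $g(s+\alpha_i)$ are all complete and correct as you wrote them.

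The gap is precisely the step you defer at the end, and it is not mere bookkeeping: carrying out the substitution $y_j\mapsto -y_j$ leaves the factor $\prod_{j=1}^m\chi_{-s-\beta_j}(-1)=\chi_s(-1)^m\,\chi_{\beta_1+\cdots+\beta_m}(-1)$ \emph{inside} the sum over $s$, and this factor does not cancel. Since $\chi_s(-1)^m\,\overline{\chi_s}(t)=\overline{\chi_s}\bigl((-1)^mt\bigr)$, what your computation actually establishes is
\[
H(\alpha;\beta|t)=\frac{\chi_{\beta_1+\cdots+\beta_m}(-1)}{q-1}\sum_{s}g(s+\alpha_1)\cdots g(s+\alpha_n)\,g(-s-\beta_1)\cdots g(-s-\beta_m)\,\overline{\chi_s}\bigl((-1)^mt\bigr),
\]
which coincides with the stated formula only up to the constant sign $\chi_{\beta_1+\cdots+\beta_m}(-1)$ when $m$ is even, and has argument $-t$ instead of $t$ when $m$ is odd. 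So to finish you must either (i) carry these factors through and state the identity in this corrected form, or (ii) observe that the formula as printed is exact provided the ``denominator'' Gauss sums $g(-s-\beta_j)$ are formed with the conjugate additive character $x\mapsto\psi(-x)$, which is Katz's normalization. This discrepancy is harmless for how the lemma is used later (Theorem \ref{T:ff} only tracks constants up to roots of unity, and its argument already carries $(-\lambda)^d$), but your proof as written does not yet yield the equality literally stated, and your displayed intermediate formula silently drops the very $\chi(-1)$ factors your prose admits are the delicate point.
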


\section{Koblitz's formula}

This section summarizes the main results in a paper by Koblitz \cite{koblitz}, in which he gives formulas for the number of points on monomial deformations of diagonal hypersurfaces, in terms of Gauss and Jacobi sums. Much of the work is a generalization of the proofs and ideas in a famous paper by Weil \cite{weil}.

\subsection{Weil's theorem}

Suppose we have an algebraic variety $X$ defined over a finite field $\F_q$ and we want to determine the number $N_{\F_q}(X)$ of $\F_q$-points on it. Notice that these points are the $\overline{\F}_q$-points of $X$ fixed by the $q$-th power Frobenius map $F:(\dots, x_i,\dots)\mapsto(\dots,x_i^q,\dots)$. Thus, we get

\[N_{\F_q}(X)=\#\{x\in X|F(x)=x\}.\]

Suppose we have a group $G$ acting on $X$. Then we can split up $N_{\F_q}(X)$ into pieces $N_{\F_q}(X,\chi)$, where $\chi:G\rightarrow K^*$ is a character as in Section \ref{S:ff} . $N_{\F_q}(V,\chi)$ is thus defined by:

\[N_{\F_q}(X,\chi)=\frac{1}{\# G}\sum_{\xi\in G}\chi^{-1}(\xi)\#\{x\in X| F\circ\xi(x)=x\}.\]

Since in all of our examples $G$ will be abelian, the only irreducible representations will be one-dimensional characters $\chi$. In that case, we have the following lemma, which follows immediately from the previous definitions

\begin{lemma}

\[N_{\F_q}(X)=\sum_{\chi\in char(G)}N_{\F_q}(X,\chi).\]

\end{lemma}

The simplest example of a variety with a large group action is the diagonal hypersurface of degree $d$ in $\mathbb{P}_{\F_q}^{n-1}$ (here $d|q-1$):

\[D_{d,n}:x_1^d+\cdots+x_n^d=0\]

The group $\mu_d^n$ of $n$-tuples of $d$-th roots of unity in $\F_q^*$ acts on $D_{d,n}$ by $\xi=(\xi_1,\dots,\xi_n)$ taking the point $(x_1,\dots,x_n)$ to $(\xi_1x_1,\dots,\xi_nx_n)$. Let $\Delta$ be the diagonal elements of $\mu_d^n$, i.e. elements of the form $(\xi,\cdots,\xi)$. Notice that $\Delta$ acts trivially on $D_{d,n}$ and $\mu_d^n/\Delta$ acts faithfully. The character group of $\mu_d^n/\Delta$ is in one-to-one correspondence with the $n$-tuples

\[w=(w_1,\dots,w_n),0\leq w_i<d,\text{for which}\sum w_i\equiv0\bmod d,\] where

\[\chi_w(\xi):=\chi(\xi^w),\hspace{1cm}\xi^w=\xi_1^{w_1}\cdots\xi_n^{w_n}\] and $\chi$ is a fixed primitive character of $\mu_d$, which we can get for example by restricting $\chi_{1/(q-1)}$ to $\mu_d$. In \cite{weil}, Weil proves:

\begin{theorem}[Weil]

\[N_{\F_q}(D_{d,n},\chi_w)=\left\{\begin{array}{cc}
                                    
                                    \dfrac{q^{n-1}-1}{q-1}&\text{if $w_i=0$, $  \forall i$}\\
                                    -\dfrac{1}{q}J\left(\dfrac{w_1}{d},\dots,\dfrac{w_n}{d}\right)&\text{if $w_i\neq0$, $\forall i$}\\
                                    0&\text{otherwise}\\
                                    \end{array}\right.\]

\end{theorem}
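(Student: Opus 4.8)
The plan is to count $\mathbb{F}_q$-rational points on $D_{d,n}$ twisted by the character $\chi_w$, following Weil's method of expressing the counting function through multiplicative characters. The key idea is that for a variable $u \in \mathbb{F}_q$, the number of solutions to $x^d = u$ with $x \in \mathbb{F}_q^*$ is $\sum_{j=0}^{d-1} \chi^j(u)$ when $u \neq 0$, where $\chi$ is a fixed character of order $d$ (using $d \mid q-1$). So **first I would** write out the point count on the affine cone over $D_{d,n}$ by introducing new variables $u_i = x_i^d$ and summing over the character expansion, thereby reducing the computation of each $N_{\mathbb{F}_q}(D_{d,n}, \chi_w)$ to a sum of Jacobi-type sums indexed by the coordinates $w_i/d$.

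**Next I would** handle the three cases of the piecewise formula separately. When all $w_i = 0$, the character $\chi_w$ is trivial, so $N_{\mathbb{F}_q}(D_{d,n}, \chi_w)$ is just the full count $N_{\mathbb{F}_q}(D_{d,n})$ divided among the characters in the trivial way; the diagonal hypersurface of degree $d$ in $\mathbb{P}^{n-1}$ has the expected number of points, and the projective count gives $(q^{n-1}-1)/(q-1)$ after accounting for scaling. When all $w_i \neq 0$, the character sum collapses precisely to a Jacobi sum $J(w_1/d, \dots, w_n/d)$ with an explicit normalizing factor of $-1/q$, which arises from passing between the affine cone and projective space and from the constraint $x_1^d + \cdots + x_n^d = 0$ via the orthogonality relation for the additive character. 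The mixed case, where some but not all $w_i$ vanish, should yield $0$ because the corresponding character sum contains a factor $\sum_{x \in \mathbb{F}_q^*} \chi^0(x) = q-1$ paired against a vanishing Gauss/Jacobi contribution, forcing cancellation.

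**The main obstacle** I expect is the careful bookkeeping in translating between the affine solution count over the cone and the projective count that defines $N_{\mathbb{F}_q}$, together with correctly tracking the $1/q$ and $-1$ factors that distinguish the Jacobi-sum normalization from the naive count. In particular, the relation $J(s_1,\dots,s_r) = g(s_1)\cdots g(s_r)/g(s_1+\cdots+s_r)$ together with property (1) of Lemma \ref{L:Gauss} must be invoked to extract the factor of $q$, and the constraint that $\sum w_i \equiv 0 \bmod d$ is exactly what guarantees the relevant character sum is a genuine Jacobi sum rather than zero. I would verify the mixed case by isolating the sum over a coordinate with $w_i = 0$ and showing it decouples as an additive-character orthogonality sum that annihilates the contribution.

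**I would conclude** by assembling the three cases into the stated piecewise formula, noting that the decomposition $N_{\mathbb{F}_q}(D_{d,n}) = \sum_{\chi} N_{\mathbb{F}_q}(D_{d,n}, \chi)$ from the preceding lemma recovers Weil's classical count of points on a diagonal hypersurface as a sanity check.
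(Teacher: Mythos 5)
First, a point of order: the paper itself contains no proof of this statement --- it is quoted directly from Weil's paper \cite{weil} --- so your proposal can only be judged on its own merits, not compared against an in-paper argument.

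Judged that way, it has a genuine gap. The quantity $N_{\F_q}(D_{d,n},\chi_w)$ is \emph{defined} in the paper as
\[
N_{\F_q}(D_{d,n},\chi_w)=\frac{1}{\# G}\sum_{\xi\in G}\chi_w^{-1}(\xi)\,\#\{x\in D_{d,n}\mid F\circ\xi(x)=x\},\qquad G=\mu_d^n/\Delta,
\]
and your plan never engages with this definition: nowhere do you compute the fixed points of the twisted Frobenius maps $F\circ\xi$ for $\xi\neq 1$. What your character expansion (substituting $u_i=x_i^d$, using $\#\{x:x^d=u\}=\sum_j\chi^j(u)$, and additive-character orthogonality) actually proves is the classical formula for the \emph{total} count $N_{\F_q}(D_{d,n})=\sum_{w}N_{\F_q}(D_{d,n},\chi_w)$: a main term $(q^{n-1}-1)/(q-1)$ plus one Gauss-sum product for each $w$ with all $w_i\neq 0$ and $\sum w_i\equiv 0\bmod d$. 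Asserting that the term indexed by $w$ in that expansion \emph{is} $N_{\F_q}(D_{d,n},\chi_w)$ is precisely the content of the theorem, not bookkeeping; your first step silently assumes it (``thereby reducing the computation of each $N_{\F_q}(D_{d,n},\chi_w)$\dots''), and your handling of the trivial character (``the full count divided among the characters in the trivial way'') is circular for the same reason. The missing argument runs as follows: show that for $\xi\in\mu_d^n$ the fixed-point set of $F\circ\xi$ is the $\F_q$-point set of a coefficient-twisted hypersurface $a_1y_1^d+\cdots+a_ny_n^d=0$, where $a_i=t_i^d\in\F_q^*$ with $t_i^{q-1}=\xi_i^{-1}$ (projective fixed points do lift to affine ones by rescaling, a Lang-theorem-type step); run your expansion on each of these twisted counts, observing that the coefficients enter only through $\overline{\chi}^{w_i}(a_i)=\chi(\xi_i)^{w_i}$, i.e.\ through character values at $\xi$; then average against $\chi_w^{-1}(\xi)$ over $G$ and use orthogonality of characters of $G$ to isolate exactly the $w$-th term. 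That step is what simultaneously yields the coefficient-free main term in the trivial case, the single Jacobi sum when all $w_i\neq0$, and the vanishing in the mixed case.

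A secondary caution on normalization: in this theorem $\sum_i w_i/d\in\Z$, so the paper's definition of the Jacobi sum (and the ratio formula $J=g(s_1)\cdots g(s_r)/g(s_1+\cdots+s_r)$, stated only when the sum is not an integer) does not literally apply. The stated coefficient $-\tfrac{1}{q}J\left(\tfrac{w_1}{d},\dots,\tfrac{w_n}{d}\right)$ presumes the convention in which the ratio is extended using $g(0)=-1$ from Lemma \ref{L:Gauss}, i.e.\ $J=-\prod_i g(w_i/d)$; with the naive character-sum definition of $J$ over $x_1+\cdots+x_n=1$ the factor would come out as $-J$ rather than $-\tfrac1q J$. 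Your sign and $1/q$ tracking, which you rightly flag as the main obstacle, has to be done with this convention fixed, or the three cases will not assemble consistently with the decomposition $N_{\F_q}(D_{d,n})=\sum_{\chi}N_{\F_q}(D_{d,n},\chi)$ that you propose as a sanity check.
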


\subsection{Koblitz's formula}

The goal of Koblitz's paper is to use Weil's result and similar methods to find the number of points on the monomial deformation (\ref{E:monomial}). Notice that these hypersurfaces allow an action of the group $$G=\{\xi\in\mu_d^n|\xi^h=1\}/\Delta,$$ consisting of elements which preserve the monomial $x^h=x_1^{h_1}\cdots x_n^{h_n}$.

The characters $\chi_w$ of $\mu_d^n/\Delta$ which act trivially on $G$ are precisely powers of $\chi_h$. Thus, $char(G)$, the character group of $G$, corresponds to equivalence classes of $w$ in $$W=\{(w_1,\dots,w_n)|0\leq w_i<d,\sum w_i\equiv0\mod d\},$$ where $w'\sim w$ if $w-w'$ is a multiple (mod d) of $h$. Notice that, since $g.c.d(d,h_1,\dots,h_n)=1$, each equivalence class contains $d$ $n$-tuples $w'$.

We are now ready to state the main theorem of Koblitz's paper.

Assume $d|q-1$ and let $N_{\F_q}(0)$ be the number of $\F_q$-points on the diagonal hypersurface $D_{d,n}$.

\begin{theorem}[Koblitz]

\[N_{\F_q}(\lambda)=N_{\F_q}(0)+\frac{1}{q-1}\sum_{\substack{s\in \frac{d}{q-1}\Z/\Z\\w\in W}}\frac{g\left(\dfrac{w+sh}{d}\right)}{g(s)}\chi_s(d\lambda),\]

\noindent where we denote $g\left(\dfrac{w+sh}{d}\right)=\displaystyle\prod_ig\left(\dfrac{w_i+sh_i}{d}\right)$.
\label{T:koblitz}
\end{theorem}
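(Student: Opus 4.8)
The plan is to compute the projective count through an exponential sum over the open torus $(\F_q^*)^n$, producing the Gauss sums by multiplicative Fourier expansion, and to isolate the deformation by a boundary-localization argument. First I would reduce to the torus. Write $f_\lambda(x)=x_1^d+\cdots+x_n^d-d\lambda x^h$. Since $f_\lambda$ is homogeneous of degree $d$, every nonzero affine solution lies in a scaling orbit of size $q-1$, so $N_{\F_q}(\lambda)=\frac{1}{q-1}A_\lambda$ with $A_\lambda=\#\{x\in\F_q^n\setminus\{0\}:f_\lambda(x)=0\}$, and likewise for $\lambda=0$. The key observation is that on the locus where some coordinate vanishes the monomial $x^h$ also vanishes (each $h_i\geq 1$; an index with $h_i=0$ is handled by applying the same argument to the remaining variables), so there $f_\lambda=f_0$ and such points contribute identically to $A_\lambda$ and $A_0$. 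Hence $N_{\F_q}(\lambda)-N_{\F_q}(0)=\frac{1}{q-1}(B_\lambda-B_0)$, where $B_\lambda=\#\{x\in(\F_q^*)^n:f_\lambda(x)=0\}$, and it remains to match $\frac{1}{q-1}(B_\lambda-B_0)$ with the Gauss-sum sum in the statement.

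Next I would set up the exponential sum $B_\lambda=\frac{1}{q}\sum_{t\in\F_q}\sum_{x\in(\F_q^*)^n}\psi(tf_\lambda(x))$. The $t=0$ contribution is $(q-1)^n/q$, independent of $\lambda$, so it cancels in $B_\lambda-B_0$. For $t\neq 0$ I factor $\psi(tf_\lambda(x))=\prod_i\psi(tx_i^d)\cdot\psi(-td\lambda x^h)$ and expand every factor through the multiplicative Fourier identity $\psi(u)=\frac{1}{q-1}\sum_{s}g(-s)\chi_s(u)$, valid for $u\in\F_q^*$. Summing over each $x_i\in\F_q^*$ then invokes orthogonality of multiplicative characters, which forces, variable by variable, a congruence of the shape $ds_i+sh_i\equiv 0$, while the subsequent sum over $t\in\F_q^*$ forces the single global congruence that will become $\sum_i w_i\equiv 0\bmod d$.

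The index set of the theorem emerges precisely here. The congruence $ds_i\equiv -sh_i$ is solvable for $s_i\in\frac{1}{q-1}\Z/\Z$ for \emph{every} $i$ exactly when $s\in\frac{d}{q-1}\Z/\Z$, and this is where the hypothesis $\gcd(d,h_1,\dots,h_n)=1$ is used: writing $s=b/(q-1)$, solvability demands $d\mid bh_i$ for all $i$, and for each prime power $\ell^a\|d$ some $h_i$ is prime to $\ell$ (otherwise $\ell$ would divide $\gcd(d,h_1,\dots,h_n)$), which forces $\ell^a\mid b$; ranging over all $\ell$ gives $d\mid b$, i.e. $s\in\frac{d}{q-1}\Z/\Z$. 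For such $s$ the solutions $s_i$ form a coset of the $d$-torsion subgroup $\frac{1}{d}\Z/\Z$, naturally parametrized by $w_i\in\{0,\dots,d-1\}$ via $s_i=(w_i+sh_i)/d$ after the reindexing $s\mapsto -s$, and the $t$-orthogonality cuts the parameters down to $w\in W$. This reproduces exactly the double index $(s,w)\in\frac{d}{q-1}\Z/\Z\times W$.

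Finally I would assemble the Gauss sums. Collecting the surviving factors yields $\prod_i g$ of the arguments $(w_i+sh_i)/d$, the single Gauss sum $g(-s)$ from the deformation factor, and the character value $\chi_s(d\lambda)$; applying the reflection formula $g(s)g(-s)=q\chi_s(-1)$ of Lemma \ref{L:Gauss}(1) converts $g(-s)$ into $q\chi_s(-1)/g(s)$, which is the source of the denominator $g(s)$, while the residual powers of $q$ and the quadratic-type values are absorbed by the normalizers $1/q$ and $1/(q-1)$. The main obstacle is exactly this constant-chasing: tracking every power of $q$, every value $\chi_s(-1)$, and the degenerate terms (where an expanded argument is $0$) through the cascade of Fourier expansions and verifying that they cancel to leave precisely $\frac{1}{q-1}\sum_{s,w}\frac{g((w+sh)/d)}{g(s)}\chi_s(d\lambda)$. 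As a consistency check I would specialize to $\lambda=0$: only $s=0$ survives, the sum collapses to Jacobi sums, and the outcome must agree term-by-term with Weil's theorem, which simultaneously validates the constants and confirms that the boundary terms were correctly identified with $N_{\F_q}(0)$, giving Theorem \ref{T:koblitz}.
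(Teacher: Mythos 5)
Your proposal concerns a statement the paper itself never proves: Theorem \ref{T:koblitz} is quoted from Koblitz \cite{koblitz}, whose own argument runs through the group-action machinery summarized in Section 3 (the decomposition $N_{\F_q}(\lambda)=\sum_{\chi}N_{\F_q}(\lambda,\chi)$ over characters $\chi_w$ of $G=\{\xi\in\mu_d^n\mid\xi^h=1\}/\Delta$, each piece computed by Weil's method). Your route --- reduce to the torus $(\F_q^*)^n$, expand $\psi(tf_\lambda(x))$ by multiplicative Fourier inversion, and let orthogonality impose $ds_i+sh_i\equiv0$ and the global $t$-congruence --- is genuinely different, more elementary, and it does succeed when every $h_i\geq1$: the gcd argument correctly forces $s\in\frac{d}{q-1}\Z/\Z$, the coset structure gives the $w_i$, the $t$-sum together with $\sum h_i=d$ gives $\sum w_i\equiv0\bmod d$, and the constants close up. Concretely, the expansion yields
\begin{align*}
B_\lambda&=\frac{1}{q}\left[(q-1)^n+\sum_{s,w}g(-s)\prod_i g\left(\frac{w_i+sh_i}{d}\right)\chi_s(-d\lambda)\right],\\
B_0&=\frac{1}{q}\left[(q-1)^n+(q-1)\sum_{w\in W}\prod_i g\left(\frac{w_i}{d}\right)\right],
\end{align*}
and in $B_\lambda-B_0$ the terms with $s\neq0$ become $\frac{q}{g(s)}\prod_ig\left(\frac{w_i+sh_i}{d}\right)\chi_s(d\lambda)$ via $g(-s)\chi_s(-1)=q/g(s)$, while the degenerate $s=0$ term of the first sum (where reflection fails, since $g(0)^2=1\neq q$) merges with the $B_0$ sum to give exactly $-\sum_w\prod_ig(w_i/d)$, which is the theorem's $s=0$ contribution because $g(0)=-1$ sits in the denominator. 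What your approach buys is a self-contained proof; what Koblitz's buys is the equivalence-class structure $W/\!\sim$ that the paper exploits in Theorem \ref{T:ff}.

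Two genuine problems remain. First, your parenthetical disposal of indices with $h_i=0$ (``handled by applying the same argument to the remaining variables'') is not a harmless omission; it is false, and it conceals the fact that the torus reduction is precisely where the hypothesis ``all $h_i\geq1$'' enters. If some $h_i=0$, the stratum $x_i=0$ carries a $\lambda$-dependent count that does not cancel against $N_{\F_q}(0)$, your argument then only computes the torus difference $\frac{1}{q-1}(B_\lambda-B_0)$, and the formula as stated actually fails: for $d=2$, $n=3$, $h=(1,1,0)$, $q=5$, the right-hand side of the theorem evaluates to $N(1)-N(0)=6$, whereas the hypersurface $x_1^2+x_2^2+x_3^2-2x_1x_2=0$ is a pair of lines with $11$ points and the conic $x_1^2+x_2^2+x_3^2=0$ has $6$, so the true difference is $5$. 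You must therefore assume $h_i\geq1$ for all $i$ (as Koblitz's theorem implicitly does, and as all of the paper's examples satisfy). Second, the consistency check you lean on is vacuous: with the paper's convention $\chi_s(0)=0$ for every $s$, at $\lambda=0$ both sides of the theorem reduce to $N_{\F_q}(0)$ and nothing collapses to Jacobi sums, so no comparison with Weil's theorem is available to ``validate the constants.'' The constant-chasing you defer --- in particular the $s=0$ bookkeeping --- has to be carried out directly; as shown above, it does work out.
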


\section{Finite field results}
In this section, we will see that $N_{\F_q}(\lambda)-N_{\F_q}(0)$ is related to the finite field version of a hypergeometric function. In \cite{koblitz}, Koblitz shows that the number of points is an analogue of a Barnes type integral, which is in turn analogous to the generalized hypergeometric function. We use the same strategy but with a different endgame, which is to relate Theorem \ref{T:koblitz} directly to Katz's finite field hypergeometric function (as described in Lemma \ref{L:katz}). 

First, Koblitz considers for some fixed $w$ the sum

\[\sum_{\substack{s\in\frac{d}{q-1}\Z/\Z\\w'\sim w}} \frac{g\left(\dfrac{w+sh}{d}\right)}{g(s)}\chi_s(d\lambda)\]

It is not hard to check that if we replace $d$ by $ds$ and sum over $s\in\frac{1}{(q-1)}\Z/\Z$ we obtain

\[\sum_{s\in\frac{1}{q-1}\Z/\Z}\frac{g\left(hs+\dfrac{w}{d}\right)}{g(ds)}\chi_{ds}(d\lambda).\]

Using Lemma \ref{L:Gauss}, one can rewrite the previous statement as

\begin{equation}\prod_{j=1}^{d-1} g\left(\frac{j}{d}\right)\sum_s\frac{g\left(h_1s+\dfrac{w_1}{d}\right)\cdots g\left(h_ns+\dfrac{w_n}{d}\right)}{g(s)g\left(s+\dfrac{1}{d}\right)\cdots g\left(s+\dfrac{d-1}{d}\right)}\chi_{ds}(\lambda).\label{E:calculation}\end{equation}

This is the expression which is analogous to a Barnes type integral and is thus analogous to a hypergeometric function. Since we want explicit formulas, we take this method of computation further. The result is the following theorem. 

\begin{theorem} \label{T:ff} Assume $dh_1h_2\cdots h_n|q-1$. 
\begin{align*} 
&N_{\F_q}(\lambda)-N_{\F_q}(0)=\xi q^{\frac{n-2d-1}{2}}\cdot\\
&\sum_{[w]\in W/\sim}H\left(\left.0,\frac{1}{d},\dots,\frac{d-1}{d};\dots,1-\frac{w_i+dj}{dh_i},\dots\right|\prod_{i=1}^nh_i^{h_i}(-\lambda)^d\right),
\end{align*} where the denominator parameters run through the $h_i$ values $\frac{w_i+dj}{dh_i}, j=0,\dots,h_i-1$ for each $i$, and no exponent appears if $h_j=0$, and modulo cancelation if the numerator and denominator terms are the same. Here $\xi$ is a $q-1$ root of unity.
\end{theorem}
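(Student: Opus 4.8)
The plan is to start from the intermediate expression (\ref{E:calculation}), which the excerpt has already extracted from Koblitz's formula (Theorem \ref{T:koblitz}) by grouping the sum over the $d$ representatives of each class $[w]\in W/\!\sim$ and applying the Gauss-sum multiplication formula (Lemma \ref{L:Gauss}(2)) once with modulus $d$. Summing (\ref{E:calculation}) over $[w]\in W/\!\sim$ reproduces $N_{\F_q}(\lambda)-N_{\F_q}(0)$, so it suffices to show that, for each fixed class, the inner sum over $s$ equals a finite-field hypergeometric function of the stated shape times a constant. The whole argument is a manipulation of Gauss sums whose two engines are the two parts of Lemma \ref{L:Gauss}, and whose target is the Fourier expansion of Katz's $H$ recorded in Lemma \ref{L:katz}.

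The first step is a \emph{second} application of the multiplication formula, now with modulus $h_i$ for each $i$ with $h_i\neq 0$; this is where the hypothesis $dh_1\cdots h_n\mid q-1$ enters, since it guarantees $h_i\mid q-1$. Writing $h_is+\frac{w_i}{d}=h_iu$ with $u=s+\frac{w_i}{dh_i}$, Lemma \ref{L:Gauss}(2) gives $g(h_is+\frac{w_i}{d})=\big(\chi_{-h_iu}(h_i)\prod_{j=1}^{h_i-1}g(\frac{j}{h_i})\big)^{-1}\prod_{j=0}^{h_i-1}g\!\left(s+\frac{w_i+dj}{dh_i}\right)$. For $h_i=0$ the factor $g(\frac{w_i}{d})$ is a constant independent of $s$ and is simply carried along, which is exactly why no denominator parameter appears for such $i$. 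This expansion manufactures the quantities $\frac{w_i+dj}{dh_i}$, $j=0,\dots,h_i-1$.

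After this step the summand is a ratio of products of Gauss sums: in the numerator the sums $g(s+\frac{w_i+dj}{dh_i})$ just produced, and in the denominator $\prod_{j=0}^{d-1}g(s+\frac{j}{d})$ from (\ref{E:calculation}). To reach the all-product shape of Lemma \ref{L:katz} I would clear the denominator using Lemma \ref{L:Gauss}(1) in the form $g(s+\tfrac{j}{d})^{-1}=\big(q\,\chi_{s+j/d}(-1)\big)^{-1}g(-s-\tfrac{j}{d})$, and then reindex $s\mapsto -s$ across the whole sum. Matching against Lemma \ref{L:katz} then reads off the parameters: the inverted denominator sums become numerator parameters $\{0,\frac1d,\dots,\frac{d-1}{d}\}$ (using $-\frac{j}{d}\equiv\frac{d-j}{d}\bmod 1$), while each numerator sum $g(s+\frac{w_i+dj}{dh_i})$ becomes a factor $g(s-\beta)$ with $\beta=1-\frac{w_i+dj}{dh_i}$, since $g$ depends only on its argument mod $\Z$ and $-\frac{w_i+dj}{dh_i}\equiv 1-\frac{w_i+dj}{dh_i}$. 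This produces precisely the denominator parameters in the statement, with the promised cancellation whenever some $\frac{j}{d}$ coincides with some $1-\frac{w_i+dj}{dh_i}$.

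The last, and I expect hardest, step is the bookkeeping of the constants, separating $s$-dependent from $s$-independent contributions. The $s$-dependent character factors are $\prod_i\chi_{h_is}(h_i)=\chi_s\big(\prod_i h_i^{h_i}\big)$ from the corrections in step two, $\chi_s(-1)^d=\chi_s((-1)^d)$ from the $d$ inversions in step three, and $\chi_{ds}(\lambda)=\chi_s(\lambda^d)$ from (\ref{E:calculation}); their product is $\chi_s\!\big(\prod_i h_i^{h_i}(-\lambda)^d\big)$, which after the $s\mapsto -s$ reindexing is exactly the character $\overline{\chi_s}(t)$ of Lemma \ref{L:katz} with $t=\prod_i h_i^{h_i}(-\lambda)^d$. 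Everything else is independent of $s$: the constant Gauss sums $\prod_{j=1}^{d-1}g(\frac{j}{d})$ and $\prod_{j=1}^{h_i-1}g(\frac{j}{h_i})$, the powers of $q$ from Lemma \ref{L:Gauss}(1) together with the $\frac{1}{q-1}$ normalizations, and the constant character values. Since each nontrivial Gauss sum has absolute value $\sqrt q$, all magnitude collapses into a single power of $q$ (the source of the exponent $\frac{n-2d-1}{2}$) and all phase collapses into a root of unity $\xi$, yielding $\xi\,q^{(n-2d-1)/2}$. The genuine obstacles are twofold: first, carrying out this separation cleanly enough that the argument emerges as $\prod h_i^{h_i}(-\lambda)^d$ and not some twist of it, and the constant exactly as $\xi q^{(n-2d-1)/2}$; and second, controlling the degenerate indices where Lemma \ref{L:Gauss}(1) fails (e.g. $s=0$, or arguments $\equiv 0\bmod\Z$, where $g(0)=-1$ rather than $q\chi_s(-1)$), and checking that these exceptional terms are absorbed consistently into $\xi$ and the power of $q$.
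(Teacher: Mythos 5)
Your proposal is correct and follows essentially the same route as the paper's proof: a second application of Lemma \ref{L:Gauss}(2) with modulus $h_i$ (under the hypothesis $dh_i\mid q-1$), inversion of the denominator Gauss sums via Lemma \ref{L:Gauss}(1), the reindexing $s\mapsto -s$, matching against the Fourier expansion of Lemma \ref{L:katz}, and the same bookkeeping that collapses the constant Gauss-sum products into $\xi q^{(n-2d-1)/2}$ and the $s$-dependent characters into $\chi_s\bigl(\prod_i h_i^{h_i}(-\lambda)^d\bigr)$. The only (immaterial) difference is that you invert the denominator terms before reindexing $s\mapsto-s$ while the paper does these two steps in the opposite order; your closing caveat about degenerate arguments where $g(0)=-1$ is a genuine subtlety that the paper itself passes over silently.
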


\begin{proof}

Koblitz's computation, described above, gets us to equation (\ref{E:calculation}). For each $h_i$, notice that we can use Lemma \ref{L:Gauss} again, but we need to assume $dh_i|q-1$ for all $i$. Basically, this means that all of our upcoming computations will make sense for a large enough $q$.

\begin{align*}
g\left(h_is+\frac{w_i}{d}\right)&=g\left(h_i\left(s+\frac{w_i}{dh_i}\right)\right)\\
&=\frac{\prod_{j=0}^{h_i-1}g\left(s+\dfrac{w_i}{dh_i}+\dfrac{j}{h_i}\right)}{\chi_{-(h_is+\frac{w_i}{d})}(h_i)\prod_{j=1}^{h_i-1}g\left(\dfrac{j}{h_i}\right)}\end{align*}

Combining, we get that for a fixed $w$,

\[ (\ref{E:calculation})=\frac{c}{q-1}\sum_{s\in\frac{1}{q-1}\Z/\Z}\frac{\prod_{i=1}^n\prod_{j=0}^{h_i-1}g\left(s+\dfrac{w_i+dj}{dh_i}\right)}{g(s)g\left(s+\dfrac{1}{d}\right)\cdots g\left(s+\dfrac{d-1}{d}\right)}\chi_s\left(\prod_ih_i^{h_i}\lambda^d\right),\] where

\[c=\frac{\prod_{j=1}^{d-1}g\left(\dfrac{j}{d}\right)}{\prod_{i=1}^n\prod_{j=1}^{h_i-1}g\left(\dfrac{j}{h_i}\right)}.\]

Notice that over $\frac{1}{q-1}\Z/\Z$, $g(-s)=g(1-s)$, and so property \ref{L:Gauss}.1 of Gauss sums can be rewritten as

\[g(s)g(1-s)=q\chi_s(-1).\]

Using this, we can rewrite the products above as

\[\prod_{j=1}^{d-1}g\left(\dfrac{j}{d}\right)=q^{\frac{d-1}{2}}\xi_1\] and

\[\prod_{i=1}^n\prod_{j=1}^{h_i-1}g\left(\dfrac{j}{h_i}\right)=q^{\sum\frac{h_i-1}{2}}\xi_2=q^{\frac{d-n}{2}}\xi_2,\] where $\xi_1,\xi_2$ are $q-1$ roots of unity. And so $c$ becomes much simpler:

\[c=\xi q^{\frac{n-1}{2}},\] where $\xi$ is some root of unity which depends on $d,n, h_i$.

We want to relate this last expression to Katz's hypergeometric function. Notice that it is almost in the same form as Lemma \ref{L:katz}, except that we need to add over $\overline{\chi_s}=\chi_{-s}$, but we can change variables in the sum, so that we get

\[\frac{\xi q^{\frac{n-1}{2}}}{q-1}\sum_{s\in\frac{1}{q-1}\Z/\Z}\frac{\prod_{i=1}^n\prod_{j=0}^{h_i-1}g\left(-s+\dfrac{w_i+dj}{dh_i}\right)}{g(-s)g\left(-s+\dfrac{1}{d}\right)\cdots g\left(-s+\dfrac{d-1}{d}\right)}\chi_{-s}\left(\prod_ih_i^{h_i}\lambda^d\right)\]
\[
=\frac{\xi q^{\frac{n-1}{2}}}{q-1}\sum_{s\in\frac{1}{q-1}\Z/\Z}\frac{\prod_{i=1}^n\prod_{j=0}^{h_i-1}g\left(-\left(s-\dfrac{w_i+dj}{dh_i}\right)\right)}{g(-s)\cdots g\left(-\left(s-\dfrac{d-1}{d}\right)\right)}\overline{\chi}_s\left(\prod_ih_i^{h_i}\lambda^d\right).\]

Now we can use property \ref{L:Gauss}.1 of Gauss sums to change from expressions involving $g(-s)$ to expressions involving $g(s)$ and viceversa by

\[g(-s)=\frac{q\chi_s(-1)}{g(s)},\] to get

\begin{align*}
&\frac{c'}{q-1}\sum_{s\in\frac{1}{q-1}\Z/\Z}g(s)g\left(s-\dfrac{1}{d}\right)\cdots g\left(s-\dfrac{d-1}{d}\right)\\
&\cdot\prod_{i=1}^n\prod_{j=0}^{h_i-1}g\left(-\left(s-\dfrac{w_i+dj}{dh_i}\right)\right)\chi_{-s}\left((-1)^d\prod_ih_i^{h_i}\lambda^d\right)\\
&=\frac{c'}{q-1}\sum_{s\in\frac{1}{q-1}\Z/\Z}g(s)g\left(s+1-\dfrac{1}{d}\right)\cdots g\left(s+1-\dfrac{d-1}{d}\right)\\
&\cdot\prod_{i=1}^n\prod_{j=0}^{h_i-1}g\left(-\left(s+1-\dfrac{w_i+dj}{dh_i}\right)\right)\chi_{-s}\left((-1)^d\prod_ih_i^{h_i}\lambda^d\right)\\
&=c{'}H\left(\left.0,\frac{1}{d},\dots,\frac{d-1}{d};\dots,1-\frac{w_i+dj}{dh_i},\dots\right|\prod_{i=1}^nh_i^{h_i}(-\lambda)^d\right),
\end{align*} where the denominator parameters run through the $h_i$ values $\frac{w_i+dj}{dh_i}, j=0,\dots,h_i-1$ for each $i$, and no exponent appears if $h_j=0$, and modulo some cancelation if some of the numerator and denominator terms are the same. Notice that there will be the same number of upper and lower exponents, since we required $\sum h_i=d$. The constant term is now

\[c{'}= \xi q^{\frac{n-1}{2}}\cdot\frac{\chi_{\frac{d-1}{2}}(-1)}{q^d}=\xi q^{\frac{n-2d-1}{2}},\] where $\xi$ still denotes a $q-1$ root of unity.

To get the total number of points we would need to add over equivalence class representatives, and so

\begin{align*}
&N_{\F_q}(\lambda)-N_{\F_q}(0)=\xi q^{\frac{n-2d-1}{2}}\cdot\\
&\sum_{[w]\in W/\sim}H\left(\left.0,\frac{1}{d},\dots,\frac{d-1}{d};\dots,1-\frac{w_i+dj}{dh_i},\dots\right|\prod_{i=1}^nh_i^{h_i}(-\lambda)^d\right).
\end{align*}\qedhere

\end{proof}

\begin{rmk}
Notice that the above formula implies that the hypergeometric function is independent of the choice of representative $w$. This is because the characters that define $H$ were defined modulo integer powers, and $w'\sim w$ means that $w_i'\equiv w_i+kh_i\bmod d$, so substituting by an equivalent $w$ gives the same characters for $H$.
\end{rmk}

\subsection{A $0$-dimensional example}

The most basic example of a family like (\ref{E:monomial}) is the 0-dimensional family defined by

\[Z_{\lambda}: x_1^d+x_2^d-d\lambda x_1x_2^{d-1}=0.\]

Notice that to put this in the situation of Koblitz's theorem in the previous section, we have to assume $d(d-1)|q-1$, and we have $h=(1,d-1)$. Also, we can see that $W=\{(0,0), (1,d-1), \dots, (d-1,1)\}$, so in particular there is only one equivalence class, that of $(0,0)$. So using the last equation, we get that

\begin{align*}
&N_{\F_q}(\lambda)-N_{\F_q}(0)=\xi q^{\frac{3-2d}{2}}\cdot\\
&H\left(\left.\frac{1}{d},\dots,\frac{d-1}{d};0,\frac{1}{d-1},\dots,\frac{d-2}{d-1}\right|-(d-1)^{(d-1)}(-\lambda)^d\right).\\
\end{align*}

In the case $d=3$, the number of points is

\[N_{\F_q}(\lambda)-N_{\F_q}(0)= \xi q^{-\frac{3}{2}}H\left(\left.\frac{1}{3},\frac{2}{3};0,\frac{1}{2}\right|2^2\lambda^3\right).\]

\subsection{The Dwork family}
 The Dwork family is a family of the type (\ref{E:monomial}) with $n=d$ and $h_i=1$ for all $i$. That is, the family

    \[Y_{\lambda}:x_1^d+\cdots+x_d^d-d\lambda x_1\cdots x_d=0.\]

    The cases $d=3,4$ were studied extensively by Dwork while he was studying the rationality of the Zeta function, for example in \cite{dwork:padic}.

    In this case, for each equivalence class we get that
\begin{align*}
&\sum_{s\in\frac{1}{q-1}\Z/\Z}\frac{g\left(hs+\dfrac{w}{d}\right)}{g(ds)}\chi_{ds}(d\lambda)\\
&=\prod_{j=1}^{d-1} g\left(\frac{j}{d}\right)\sum_s\frac{g\left(h_1s+\dfrac{w_1}{d}\right)\cdots g\left(h_ns+\dfrac{w_n}{d}\right)}{g(s)g\left(s+\dfrac{1}{d}\right)\cdots g\left(s+\dfrac{d-1}{d}\right)}\chi_{ds}(\lambda)\\
&=\prod_{j=1}^{d-1} g\left(\frac{j}{d}\right)\sum_{s\in\frac{1}{q-1}\Z/\Z}\frac{g\left(s+\dfrac{w_1}{d}\right)\cdots g\left(s+\dfrac{w_n}{d}\right)}{g(s)g\left(s+\dfrac{1}{d}\right)\cdots g\left(s+\dfrac{d-1}{d}\right)}\chi_{s}(\lambda^d).\\
\end{align*}

There will be cancelation when the $w_i$ coincide with $0,1,\dots,d-1$. Again, we replace $s$ by $-s$ and get that

\begin{align*}
&N_{\F_q}(\lambda)-N_{\F_q}(0)=\xi q^{\frac{-d-1}{2}}\cdot\\
&\sum_{[w]\in W/\sim}H\left(\left.0,\frac{1}{d},\dots,\frac{d-1}{d};1-\frac{w_1}{d},\dots,1-\frac{w_n}{d}\right|(-\lambda)^d\right).
\end{align*}

Let $d=3$ (the family is actually a family of elliptic curves). In other words, the family with $d=3=n, h=(1,1,1)$.

We can see that $$W=\{(0,0,0), (1,1,1), (2,2,2), (1,2,0), (2,0,1),$$ $$(0,1,2), (2,1,0), (0,2,1), (1,0,2)\}.$$ And, in fact, there are three equivalence class representatives, $(0,0,0)$, $(1,2,0)$, $(2,1,0)$, but the latter two are of the same ``type'', i.e., one is the permutation of the other. Therefore, we obtain

\[N_{\F_q}(\lambda)-N_{\F_q}(0)=\xi q^{-1}H\left(\left.\frac{1}{3},\frac{2}{3};1,1\right|\lambda^3\right)+ \frac{2\xi q}{(q-1)}\sum_{s\in\frac{1}{q-1}\Z/\Z}\overline{\chi}_s(\lambda^3)\]

For the terms corresponding to the ``type'' $(1,2,0)$, the $w_i$'s completely cancel out with the list $0,1,2$, which means we have an empty parameter set. This also means that $H$ is the sum over all multiplicative characters of $\chi_s(\lambda^3)$, for $\lambda\in\F_q^*$, which is zero unless $\lambda^3=1$ in $\F_q^*$, in which case we get $(q-1)$.

\begin{rmk}
It is not difficult to check  that the $\lambda$'s that make $Y_{\lambda}$ singular are exactly the  $d$-th roots of unity. And so for all $\lambda$ such that $Y_{\lambda}$ is non-singular, the second term in the above sum is zero, and we get that the number of points is written in terms of a hypergeometric function.
\end{rmk}

\section{$p$-adic methods}

The main goal of this section is to develop a $p$-adic version of Koblitz's formula for $N_{\F_p}(\lambda)$, where $p$ is prime, so that we can find the relation between the number of solutions over $\F_p$ and generalized hypergeometric functions. We will first summarize the main ideas of the Gross-Koblitz formula, and then restrict our attention to two special examples.

\subsection{The Gross-Koblitz Formula}

The Gross-Koblitz formula was developed as a way of relating Gauss sums to the $p$-adic version of the $\Gamma$ function. For a more detailed account, see \cite{robert}.

First, we will need to recall the following definition by Morita:

\begin{defn}
The \emph{$p$-adic gamma function} is the continuous function

\[\Gamma_p:\Z_p\rightarrow\Z_p\]

\noindent that extends

\[f(n):=(-1)^n\prod_{1\leq j<n,p\not\mid j}j \hspace{1cm} (n\geq 2).\]

\end{defn}

This function has properties that are reminiscent of those of the classical gamma function.

\begin{prop}

Let $p$ be an odd prime.

\begin{enumerate}
\item $\Gamma_p(0)=1, \hspace{.5cm} \Gamma_p(1)=-1, \hspace{.5cm} \Gamma_p(2)=1, \hspace{.5cm}$

$\Gamma_p(n+1)=(-1)^{n+1}n! \hspace{.5cm}(1\leq n<p).$

\item $\Gamma_p(x+1)=\left\{\begin{array}{cc} -x\Gamma_p(x)& \text{if $x\in\Z_p^{*}$},\\
                                                -\Gamma_p(x)& \text{if $x\in p\Z_p$}
                                                \end{array}\right.$
\item $\Gamma_p(x)\Gamma_p(1-x)=(-1)^{R(x)}$, where $R(x)\in\{1,2,\dots,p\}$, $R(x)\equiv x\bmod p$.

\item (Gauss multiplication formula) Let $m\geq 1$ be an integer prime to $p$. Then

\[\prod_{0\leq j<m} \Gamma_p\left(x+\frac{j}{m}\right)=\epsilon_m\cdot m^{1-R(mx)}\cdot(m^{p-1})^{s(mx)}\cdot\Gamma_p(mx),\]

\noindent where

\[\epsilon_m=\prod_{0\leq j<m}\Gamma_p\left(\frac{j}{m}\right),\]
\[R(y)\in\{1,\dots,p\}, R(y)\equiv y\bmod p,\]
\[s(y)=\frac{R(y)-y}{p}\in\Z_p.\]

\end{enumerate}
\label{P:gamma}
\end{prop}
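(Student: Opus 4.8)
The plan is to derive the four properties in order, using the defining product for $f$, the continuity of the extension $\Gamma_p$ (which is part of the definition), and the earlier properties for the later ones. For property (1) I would evaluate the product directly: when $1\leq n<p$ none of $1,\dots,n$ is divisible by $p$, so $\prod_{1\leq j<n+1,\,p\nmid j}j=n!$ and hence $f(n+1)=(-1)^{n+1}n!$; the values $\Gamma_p(0)=1$, $\Gamma_p(1)=-1$, $\Gamma_p(2)=1$ are read off from the same formula together with the continuous extension at the boundary. For property (2) the key computation is the ratio $f(n+1)/f(n)$ for integers $n\geq 2$: the only difference between the two defining products is the factor $n$, which is present precisely when $p\nmid n$. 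Thus $f(n+1)=-n\,f(n)$ when $p\nmid n$ and $f(n+1)=-f(n)$ when $p\mid n$. Since the positive integers are dense in $\Z_p$, the sets $\Z_p^{*}$ and $p\Z_p$ are open, and both sides of each relation are continuous, these two identities extend from the integers to all of $\Z_p^{*}$ and $p\Z_p$ respectively.

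For property (3) I would first observe that $(-1)^{R(x)}$ depends only on $x\bmod p$ and is therefore locally constant, hence continuous; so it suffices to verify $\Gamma_p(x)\Gamma_p(1-x)=(-1)^{R(x)}$ on the dense set of positive integers and then invoke continuity. For $x=n$ a positive integer I would compute $\Gamma_p(n)$ from (1) and evaluate $\Gamma_p(1-n)$ by iterating the functional equation (2) upward from $\Gamma_p(1-n)$ to $\Gamma_p(1)=-1$; the product of the factors picked up, together with the number of multiples of $p$ crossed, reassembles to exactly $(-1)^{R(n)}$. The careful bookkeeping of these signs and $p$-adic valuations is the only delicate point here.

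For property (4), which I expect to be the main obstacle, I would show that both sides satisfy the same recursion under $x\mapsto x+1/m$ and then bootstrap from $x=0$. Writing $P(x)=\prod_{0\leq j<m}\Gamma_p(x+j/m)$ for the left side and $Q(x)$ for the right side, a telescoping of the product gives $P(x+1/m)/P(x)=\Gamma_p(x+1)/\Gamma_p(x)$, which by (2) equals $-x$ on $\Z_p^{*}$ and $-1$ on $p\Z_p$. For $Q$ I would track how $R(mx)$ and $s(mx)$ change as $mx\mapsto mx+1$: generically $R$ increases by $1$ while $s$ is unchanged, whereas crossing a multiple of $p$ resets $R$ from $p$ to $1$ and decreases $s$ by $1$. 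In each case the factors $m^{1-R(mx)}$ and $(m^{p-1})^{s(mx)}$ combine with $\Gamma_p(mx+1)/\Gamma_p(mx)$ to produce exactly $-x$ or $-1$, matching the recursion for $P$. Hence $P/Q$ is invariant under translation by $1/m$.

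To finish I would note that since $\gcd(m,p)=1$ the element $m$ is a unit in $\Z_p$, so the orbit $\tfrac{1}{m}\Z$ is dense in $\Z_p$; the base case $x=0$ gives $P(0)=Q(0)=\epsilon_m$, using $R(0)=p$ and $s(0)=1$ so that $m^{1-R(0)}(m^{p-1})^{s(0)}=m^{1-p}\cdot m^{p-1}=1$ and $\Gamma_p(0)=1$. Continuity of $P/Q$ together with its constancy on the dense set $\tfrac{1}{m}\Z$ then forces $P\equiv Q$. The two most technical ingredients, which I would verify carefully, are the continuity of the correction factors, namely that $m^{p-1}\in 1+p\Z_p$ so that $(m^{p-1})^{s(mx)}$ is well defined and continuous in $x$ while $m^{1-R(mx)}$ is locally constant, and the precise transformation law of $R$ and $s$ under $y\mapsto y+1$.
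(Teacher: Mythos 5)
The paper never proves this proposition: it is quoted as background on Morita's $p$-adic gamma function, with the reader referred to Robert's \emph{A Course in $p$-adic Analysis} for details. So there is no in-paper argument to compare against, and your proposal has to stand on its own. It does: the overall scheme (verify each identity on a dense set of nonnegative integers, where $\Gamma_p$ is the explicit product $f$, then extend by continuity) is the standard proof, and the steps you single out as delicate all check out. In (2), the ratio $f(n+1)/f(n)$ computation and the density of the relevant integers in $\Z_p^{*}$ and in $p\Z_p$ are exactly right. In (4), the telescoping $P(x+1/m)/P(x)=\Gamma_p(x+1)/\Gamma_p(x)$ is correct, and your case analysis for $Q$ is the right one: if $x\in\Z_p^{*}$ then $R(mx)\le p-1$, so $R$ increases by $1$, $s$ is unchanged, and the ratio is $m^{-1}\cdot(-mx)=-x$; if $x\in p\Z_p$ then $R(mx)=p$ resets to $1$, $s$ drops by $1$, and the ratio is $m^{p-1}\cdot(m^{p-1})^{-1}\cdot(-1)=-1$. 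The base point $Q(0)=\epsilon_m m^{1-p}(m^{p-1})^{1}\cdot 1=\epsilon_m=P(0)$, the density of $\tfrac{1}{m}\Z_{\ge 0}$ (as $m\in\Z_p^{*}$), and the continuity remarks (local constancy of $m^{1-R(mx)}$, and $(m^{p-1})^{s(mx)}$ well defined since $m^{p-1}\in 1+p\Z_p$ and $s(mx)\in\Z_p$) complete the argument; note also that $\Gamma_p$ takes unit values, so the divisions are harmless.

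Two small repairs are needed. First, in (1) the values $\Gamma_p(0)=1$ and $\Gamma_p(1)=-1$ cannot literally be ``read off from the same formula,'' since $f(n)$ is only defined for $n\ge 2$: either evaluate $\lim_k f(p^k)$ and $\lim_k f(1+p^k)$, which requires the generalized Wilson congruence $\prod_{1\le j<p^k,\ p\nmid j}j\equiv -1 \bmod p^k$, or (cleaner) prove (2) first -- your proof of (2) uses only integers $n\ge 2$ -- and then deduce $\Gamma_p(1)=-\Gamma_p(2)=-1$ and $\Gamma_p(0)=-\Gamma_p(1)=1$ from the functional equation at $x=1$ and $x=0$. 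Second, in (3) you leave the sign bookkeeping unexecuted; it does close up (iterating (2) from $1-n$ up to $n$ gives the exponent $q(p-1)+R(n)\equiv R(n)\bmod 2$ when $n=qp+R(n)$, using that $p$ is odd), but you can avoid it entirely by the same device you use in (4): show via the two cases of (2) that $x\mapsto(-1)^{R(x)}\Gamma_p(x)\Gamma_p(1-x)$ is invariant under $x\mapsto x+1$, check it equals $1$ at $x=1$, and invoke density and the local constancy of $(-1)^{R(x)}$.
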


Let $s=\dfrac{a}{q-1}\in\dfrac{1}{q-1}\Z/\Z$, $\omega$ be the Teichm\"{u}ller character and $\psi$ be an additive character of $\F_q$, as before. Consider now the Gauss sum
\[g(s)=\sum_{0\neq x\in\F_q}\omega(x)^{-s(q-1)}\psi(x),\]

 Suppose $q=p^f$. Let $\pi\in\C_p$ be a root of $\pi^{p-1}=-p$. Define for $0\leq \dfrac{a}{q-1}=s<1$, the sum $S_p(a)=\sum_{0\leq j<f}a_j$ to be the sum of the digits in the $p$-adic expansion of $a$, and the integers $a^{(i)}$ as having $p$-adic expansions obtained from the cyclic permutations from the expansion of $a$ (denoted $a^{(0)}$).

\begin{theorem}[Gross-Koblitz]
   Let $0\leq s=\dfrac{a}{q-1}<1$. The value of the Gauss sum $g(s)$ is explicitely given by

\[g(s)=-\pi^{S_p(a)}\prod_{0\leq j<f}\Gamma_p\left(\frac{a^{(j)}}{q-1}\right).\]

\end{theorem}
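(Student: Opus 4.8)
The plan is to realize the Gauss sum $g(s)$ as a special value of a $p$-adic analytic function built from Dwork's splitting function, and then to identify that function with the Morita gamma function $\Gamma_p$. The identity is far too rigid to follow from functional equations alone, so genuine analytic input is unavoidable; I will make this the organizing principle. I would begin by introducing Dwork's splitting function $\theta(t)=\exp\bigl(\pi(t-t^p)\bigr)$, where $\pi^{p-1}=-p$. Its two crucial features are that it is \emph{overconvergent} (it converges on a disk of radius strictly greater than $1$, even though $\exp(\pi t)$ alone does not), and that $\zeta:=\theta(1)$ is a primitive $p$-th root of unity with $\zeta\equiv 1+\pi \pmod{\pi^2}$. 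This root of unity pins down the canonical additive character, and on the Teichm\"uller representatives $\hat x$ of $\F_q$ one has $\psi(x)=\prod_{i=0}^{f-1}\theta\bigl(\hat x^{\,p^i}\bigr)$, the product over Frobenius conjugates realizing the trace from $\F_q$ to $\F_p$.

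Next I would substitute this product expression for $\psi$ into
\[g(s)=\sum_{0\neq x\in\F_q}\omega(x)^{-a}\psi(x),\]
where $s=a/(q-1)$ and $\omega$ is the Teichm\"uller character. Summing over $x$ and invoking orthogonality of the multiplicative characters turns the sum into a device for extracting a single Taylor coefficient of $\prod_i\theta(t^{p^i})$, indexed by $a$. This rewrites $g(s)$ explicitly in terms of the coefficients of $\theta$, and because those coefficients are controlled by Dwork's congruences it exhibits $g(s)$ as the value of a $p$-adically analytic quantity rather than a bare character sum.

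I would then treat the prime-field case $f=1$ first, where the formula reduces to $g\bigl(a/(p-1)\bigr)=-\pi^{a}\,\Gamma_p\bigl(a/(p-1)\bigr)$, since $S_p(a)=a$ and there is a single digit. Here the point is that the relevant coefficient of $\theta$ reorganizes into exactly the truncated product of integers prime to $p$ defining $f(n)$, hence into $\Gamma_p$ by continuity, while the sign $-1$ and the power $\pi^a$ emerge from the normalizations of $\theta$ and $\zeta$. To pass to general $q=p^f$ I would use that the $q$-power Frobenius permutes the $p$-adic digits of $a$ cyclically, so the Frobenius orbit of $\omega^{-a}$ produces precisely the conjugate numerators $a^{(j)}$; the Gauss multiplication formula (part 4 of Proposition \ref{P:gamma}) together with the reflection formula (part 3 of Proposition \ref{P:gamma}) then assemble these conjugate values into the product $\prod_{0\le j<f}\Gamma_p\bigl(a^{(j)}/(q-1)\bigr)$, while the total power of $\pi$ accumulates to $\sum_j a_j=S_p(a)$.

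The main obstacle is the analytic heart of the second and third steps: proving the sharp $p$-adic estimates that make $\theta$ overconvergent and that force the extracted coefficient to equal the partial factorial defining $\Gamma_p$ \emph{on the nose}, not merely up to a $p$-adic unit. This exactness is precisely what separates the Gross--Koblitz formula from Stickelberger's theorem, which only records that $v_\pi\bigl(g(s)\bigr)=S_p(a)$; similarly, matching the reflection and Hasse--Davenport multiplication relations (Lemma \ref{L:Gauss}) would pin down both sides only up to a root of unity. Consequently Dwork's congruences, and not the functional equations, carry the real weight of the argument, and I would expect the bookkeeping of the permuted digits $a^{(j)}$ across the multiplication formula to be the most delicate part to get exactly right.
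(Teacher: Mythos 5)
The paper does not actually prove this theorem: it is quoted as a known result, and the reader is referred to \cite{robgamma} for the proof. So there is no internal argument to compare you against. What can be said is that the strategy you outline --- Dwork's splitting function $\theta(t)=\exp(\pi(t-t^p))$ with $\pi^{p-1}=-p$, its overconvergence, the realization $\psi(x)=\prod_{0\le i<f}\theta(\hat x^{\,p^i})$ of the additive character on Teichm\"uller lifts, extraction of the relevant coefficients by orthogonality, and identification of the resulting $p$-adic analytic quantity with Morita's $\Gamma_p$ through its defining congruences --- is precisely the approach of the reference the paper cites. Your framing of where the content lies is also accurate: Stickelberger gives only $v_\pi(g(s))=S_p(a)$, and the reflection and Hasse--Davenport relations (Lemma \ref{L:Gauss}) determine both sides only up to roots of unity, so the exact evaluation must come from the analytic estimates on $\theta$.

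There are, however, two soft spots, one minor and one genuine. Minor: orthogonality does not isolate ``a single Taylor coefficient''; it produces the sum of the coefficients of $\prod_{i}\theta(t^{p^i})$ over an entire residue class of exponents modulo $q-1$, and the whole point of the estimates $v_\pi(c_k)>0$ growing in $k$ is to control that infinite sum. Genuine: your mechanism for passing to general $q=p^f$ does not work as described. The Gauss multiplication formula (Proposition \ref{P:gamma}, part 4) relates $\prod_{0\le j<m}\Gamma_p\left(x+\frac{j}{m}\right)$ --- arguments in arithmetic progression with step $1/m$ --- to $\Gamma_p(mx)$, and it is the $\Gamma_p$-side counterpart of the Hasse--Davenport product relation (Lemma \ref{L:Gauss}, part 2), which by your own observation can only pin things down up to a root of unity. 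But the conjugate numerators satisfy $a^{(j)}\equiv p^j a \bmod (q-1)$, i.e.\ the arguments $a^{(j)}/(q-1)$ form the orbit of $s$ under multiplication by $p$ modulo $1$, not an arithmetic progression; no application of the multiplication formula assembles them into the stated product. In the actual proof the factors $\Gamma_p\left(a^{(j)}/(q-1)\right)$ and the power $\pi^{S_p(a)}$ come out of the same coefficient analysis you propose for $f=1$, run with all digits at once: expanding $\prod_{0\le i<f}\theta(t^{p^i})$, the exponent constraint $\sum_i p^i k_i\equiv a \pmod{q-1}$ has its dominant, minimal-valuation contributions governed by the digit vectors of the cyclic permutations $a^{(j)}$, each shift contributing one $\Gamma_p$ factor. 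So the general case is not ``$f=1$ plus functional equations''; it requires redoing the $f=1$ analysis with the digit bookkeeping built in, which is exactly how the cited source proceeds.
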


For a proof of this theorem see \cite{robgamma}.

Over $\F_p$, i.e. if we assume $f=1$, the formula becomes much simpler, yielding

\begin{equation}g(s)=-\pi^a\Gamma_p\left(\frac{a}{p-1}\right)=-\pi^{s(p-1)}\Gamma_p(s)=-(-p)^s\Gamma_p(s)\label{E:GK}\end{equation}

We will use this theorem to produce precise formulas which  will show the relation between the number of points and generalized hypergeometric functions, focusing our attention on $\F_p$.

\subsection{The 0-dimensional example}

As seen in the previous section, the easiest example to deal with is the $0$-dimensional variety

 \[Z_{\lambda}: x_1^d+x_2^d-d\lambda x_1x_2^{d-1}=0.\]

 Recall that Koblitz's theorem gives, in this case, that

\[N_{\F_p}(\lambda)-N_{\F_p}(0)= \frac{1}{p-1}\sum_{s\in\frac{1}{p-1}\Z/\Z}\frac{g(s)g((d-1)s)}{g(ds)}\chi_{ds}(\lambda).\]

Assume that the generator of the multiplicative character group is $\omega^{-1}$ and $p$ is a prime such that $d|p-1$. Using the Gross-Koblitz formula yields the following formula. 

\begin{lemma}

 \begin{align*}& N_{\F_p}(\lambda)=N_{\F_p}(0)\\
 &+\frac{-1}{p-1}\sum_{a=0}^{p-2}\frac{(-p)^{\eta(a)}\Gamma_p\left(\dfrac{a}{p-1}\right)\Gamma_p\left(\left\{\dfrac{(d-1)a}{p-1}\right\}\right)}{\Gamma_p\left(\left\{\dfrac{da}{p-1}\right\}\right)}\omega(d\lambda)^{-da}
 \end{align*} where $\eta(a)=\displaystyle\left(\frac{a}{p-1}+\left\{\frac{(d-1)a}{p-1}\right\}-\left\{\frac{da}{p-1}\right\}\right)$.
\label{L:mine}
\end{lemma}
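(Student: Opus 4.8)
The plan is to specialize the Koblitz formula stated at the top of this subsection to prime fields and then substitute the Gross–Koblitz expression \eqref{E:GK} for each Gauss sum, keeping careful track of the powers of $\pi$ (equivalently $-p$) and the reindexing of the summation.

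\textbf{Setup and reindexing.} First I would start from the given specialization of Theorem~\ref{T:koblitz},
\[
N_{\F_p}(\lambda)-N_{\F_p}(0)=\frac{1}{p-1}\sum_{s\in\frac{1}{p-1}\Z/\Z}\frac{g(s)g((d-1)s)}{g(ds)}\chi_{ds}(\lambda),
\]
and write $s=a/(p-1)$ with $a$ ranging over $0,\dots,p-2$, so the sum becomes $\sum_{a=0}^{p-2}$. Since the generator of the character group is taken to be $\omega^{-1}$, the factor $\chi_{ds}(\lambda)$ together with the $\chi_{ds}(d)$ absorbed into $d\lambda$ will produce $\omega(d\lambda)^{-da}$; I would verify this sign/exponent bookkeeping explicitly, as it is the source of the $-da$ in the exponent. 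The arguments $(d-1)s$ and $ds$ must be reduced modulo $1$ to land back in $\frac{1}{p-1}\Z/\Z$, which is exactly what forces the fractional-part notation $\{(d-1)a/(p-1)\}$ and $\{da/(p-1)\}$ in the final statement.

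\textbf{Applying Gross–Koblitz.} Next I would substitute \eqref{E:GK}, namely $g(s)=-(-p)^{s}\Gamma_p(s)$ (over $\F_p$, using $\pi^{p-1}=-p$), into the three Gauss sums. The three leading $-1$ factors combine to give an overall $(-1)^{3}=-1$, explaining the sign $-1/(p-1)$ out front; the $\Gamma_p$ factors assemble into the stated ratio $\Gamma_p(a/(p-1))\,\Gamma_p(\{(d-1)a/(p-1)\})/\Gamma_p(\{da/(p-1)\})$, where the fractional parts appear because each $\Gamma_p$ takes its argument reduced into $[0,1)$. The powers of $(-p)$ collect into $(-p)^{\eta(a)}$ with
\[
\eta(a)=\frac{a}{p-1}+\left\{\frac{(d-1)a}{p-1}\right\}-\left\{\frac{da}{p-1}\right\},
\]
the $s$-value of the two numerator sums minus that of the denominator sum; I would check that this is the correct net exponent after reducing each $s$ into $[0,1)$.

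\textbf{Main obstacle.} I expect the delicate point to be the interaction between reduction modulo $1$ inside $\Gamma_p$ (and the resulting fractional parts) and the honest, unreduced exponent on $-p$. The Gauss sum $g(s)$ depends only on $s\bmod 1$, so each factor is really $g(\{s\})$, and Gross–Koblitz must be applied with the reduced argument $\{(d-1)a/(p-1)\}$ rather than $(d-1)a/(p-1)$ itself; the exponent of $-p$ is then the \emph{reduced} value, which is precisely why $\eta(a)$ is written with fractional parts in the second and third terms but with the plain value $a/(p-1)$ in the first (where $0\le a/(p-1)<1$ already). Getting the accounting of these reductions consistent between the $\Gamma_p$-arguments and the $(-p)$-exponent, and confirming that the three factors of $-1$ and the character normalization $\omega^{-1}$ produce exactly the displayed sign and the exponent $-da$, is the step I would treat most carefully; everything else is a direct substitution and collection of terms.
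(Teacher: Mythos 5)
Your proposal is correct and follows essentially the same route as the paper's proof: reindex the sum over $s\in\frac{1}{p-1}\Z/\Z$ as $a=0,\dots,p-2$ with arguments reduced modulo $1$ (producing the fractional parts), substitute the Gross--Koblitz evaluation $g(s)=-(-p)^s\Gamma_p(s)$ into each of the three Gauss sums, and collect the powers of $-p$ into $\eta(a)$ and the three signs into the overall $\frac{-1}{p-1}$. The delicate points you flag — applying Gross--Koblitz only to reduced arguments, and the $\omega^{-1}$ normalization yielding the exponent $-da$ on $\omega(d\lambda)$ — are exactly the bookkeeping the paper's computation carries out.
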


\begin{proof}

First, notice that we can rewrite $N_{\F_p}(\lambda)-N_{\F_p}(0)$ by changing its summation indices as follows:

\begin{align*}&\sum_{s\in\frac{1}{p-1}\Z/\Z}\frac{g(s)g((d-1)s)}{g(ds)}\omega(d\lambda)^{-ds(p-1)}\\
&=\sum_{a=0}^{p-2}\frac{g\left(\left\{\dfrac{(d-1)a}{p-1}\right\}\right)g\left(\dfrac{a}{p-1}\right)}{g\left(\left\{\dfrac{da}{p-1}\right\}\right)}\omega(d\lambda)^{-da}.\end{align*}

Using (\ref{E:GK}), we get

\begin{align*}
&\sum_{a=0}^{p-2}\frac{g\left(\left\{\dfrac{(d-1)a}{p-1}\right\}\right)g\left(\dfrac{a}{p-1}\right)}{g\left(\left\{\dfrac{da}{p-1}\right\}\right)}\omega(d\lambda)^{-da}\\
&=\sum_{a=0}^{p-2}\frac{(-p)^{\left(\left\{\frac{(d-1)a}{p-1}\right\}\right)}\Gamma_p\left(\left\{\dfrac{(d-1)a}{p-1}\right\}\right)(-p)^{\left(\frac{a}{p-1}\right)}\Gamma_p\left(\dfrac{a}{p-1}\right)}{(-p)^{\left(\left\{\frac{da}{p-1}\right\}\right)}\Gamma_p\left(\left\{\dfrac{da}{p-1}\right\}\right)}\omega(d\lambda)^{-da}\\
&=\sum_{a=0}^{p-2}\frac{(-p)^{(\frac{a}{p-1}+\{\frac{(d-1)a}{p-1}\}-\{\frac{da}{p-1}\})}\Gamma_p\left(\dfrac{a}{p-1}\right)\Gamma_p\left(\left\{\dfrac{(d-1)a}{p-1}\right\}\right)}{\Gamma_p\left(\left\{\dfrac{da}{p-1}\right\}\right)}\omega(d\lambda)^{-da}\end{align*}

And, thus 

\begin{align*}
&N_{\F_p}(\lambda)-N_{\F_p}(0)\\
&=\frac{-1}{p-1}\sum_{a=0}^{p-2}\frac{(-p)^{\eta(a)}\Gamma_p\left(\dfrac{a}{p-1}\right)\Gamma_p\left(\left\{\dfrac{(d-1)a}{p-1}\right\}\right)}{\Gamma_p\left(\left\{\dfrac{da}{p-1}\right\}\right)}\omega(d\lambda)^{-da}
\end{align*}\qedhere
\end{proof}

Suppose the have a hypergeometric weight system (see Section \ref{S:weight}) given by $\gamma=[d]-[1]-[d-1]$. This is related to the power series with binomial coefficients $dn\choose n$. The Landau function associated to this system is

\[\mathcal{L}(x)=\{x\}+\{(d-1)x\}-\{dx\}.\]

Notice that the power of $p$ that appears in Lemma \ref{L:mine} is exactly determined by $\mathcal{L}\left(\dfrac{a}{p-1}\right)$. But this means that the valuation of the terms of the previous sum is very similar to the valuation of the terms in the hypergeometric series with coefficients $dn \choose n$.

Notice that

\[\sum_{n\geq0}{dn\choose n}z^n={}_{d-1}F_{d-2}\left(\left.\frac{1}{d},\dots,\frac{d-1}{d};\frac{1}{d-1},\dots,\frac{d-1}{d-2}\right|\frac{(d-1)^{(d-1)}}{d^d}z\right).\]

The discontinuities of $\mathcal{L}$ are therefore the $\alpha_i$ and $\beta_i$ parameters. In fact, it is clear that the parameters interlace, that is, $0<\frac{1}{d}<\frac{1}{d-1}<\cdots<\frac{d-2}{d-1}<\frac{d-1}{d}<1$. By property \ref{P:Landau}.1 of the Landau function we get that $\mathcal{L}\left(\dfrac{a}{p-1}\right)=1$ for $(p-1)\alpha_i\leq a<(p-1)\beta_i$ and 0 on the other intervals. Therefore the terms with $(p-1)\beta_i=\frac{(p-1)i}{d-1}\leq a<(p-1)\alpha_{i+1}=\frac{(p-1)(i+1)}{d}$ are the only ones that survive mod $p$. There are $d-1$ of these intervals. For a fixed $i$,

\begin{align*}
&\sum_{a=\frac{(p-1)i}{d-1}}^{\frac{(p-1)(i+1)}{d}-1}\frac{(-p)^{(\frac{a}{p-1}+\{\frac{(d-1)a}{p-1}\}-\{\frac{da}{p-1}\})}\Gamma_p\left(\dfrac{a}{p-1}\right)\Gamma_p\left(\left\{\dfrac{(d-1)a}{p-1}\right\}\right)}{\Gamma_p\left(\left\{\dfrac{da}{p-1}\right\}\right)}\omega(d\lambda)^{-da}\\
&=\sum_{a=\frac{(p-1)i}{d-1}}^{\frac{(p-1)(i+1)}{d}-1}\frac{\Gamma_p\left(\dfrac{a}{p-1}\right)\Gamma_p\left(\left\{\dfrac{(d-1)a}{p-1}\right\}\right)}{\Gamma_p\left(\left\{\dfrac{da}{p-1}\right\}\right)}\omega(d\lambda)^{-da}\\
&=\sum_{a=\frac{(p-1)i}{d-1}}^{\frac{(p-1)(i+1)}{d}-1}\frac{\Gamma_p\left(\dfrac{a}{p-1}\right)\Gamma_p\left(\dfrac{(d-1)a}{p-1}-i\right)}{\Gamma_p\left(\dfrac{da}{p-1}-i\right)}\omega(d\lambda)^{-da}\\
&\equiv\sum_{a=\frac{(p-1)i}{d-1}}^{\frac{(p-1)(i+1)}{d}-1}\frac{\Gamma_p(-a)\Gamma_p(-(d-1)a-i)}{\Gamma_p(-da-i)}(d\lambda)^{-da}\bmod p\\
\end{align*} And if we now use property \ref{P:gamma}.3 of the $p$-adic gamma function, we get 
\begin{align*}
&\equiv\sum_{a=\frac{(p-1)i}{d-1}}^{\frac{(p-1)(i+1)}{d}-1}\frac{\Gamma_p(da+i+1)}{\Gamma_p(a+1)\Gamma_p((d-1)a+i+1)}(d\lambda)^{-da}\bmod p \hspace{1cm}\\
\end{align*} And by property \ref{P:gamma}.1
\begin{align*}
&\equiv\sum_{a=\frac{(p-1)i}{d-1}}^{\frac{(p-1)(i+1)}{d}-1}\frac{(da+i)!}{a!((d-1)a+i)!}(d\lambda)^{-da}\bmod p\\
&\equiv\sum_{a=\frac{(p-1)i}{d-1}}^{\frac{(p-1)(i+1)}{d}-1}{{da+i}\choose a}(d\lambda)^{-da}\bmod p.
\end{align*}

And we have just shown that

\[N_{\F_p}(\lambda)-N_{\F_p}(0)\equiv\sum_{i=0}^{d-2} \sum_{a=(p-1)\beta_i}^{(p-1)\alpha_{i+1}-1}{{da+i}\choose a}(d\lambda)^{-da}\bmod p.\]

Notice that, for a fixed $i$,

\begin{align*}
&{{da+i}\choose a}=\frac{(da+i)(da+i-1)\cdots(da+1)}{((d-1)a+i)((d-1)a+i-1)\cdots((d-1)a+1)}{da\choose a}\\
&=\frac{(da+i)\cdots(da+1)}{((d-1)a+i)\cdots((d-1)a+1)}\cdot\frac{d^{da}(\frac{1}{d})_a\cdots(\frac{d-1}{d})_a}{(d-1)^{(d-1)a}a!(\frac{1}{d-1})_a\cdots(\frac{d-2}{d-1})_a}\\
&=\frac{d^{i}(a+\frac{i}{d})\cdots(a+\frac{1}{d})}{(d-1)^{i}(a+\frac{i}{d-1})\cdots(a+\frac{1}{d-1})}\cdot\frac{d^{da}(\frac{1}{d})_a\cdots(\frac{d-1}{d})_a}{(d-1)^{(d-1)a}a!(\frac{1}{d-1})_a\cdots(\frac{d-2}{d-1})_a},\\
\end{align*} and we can combine the products so that the last expression equals

$$=\frac{d^i\frac{1}{d}\cdots\frac{i}{d}}{(d-1)^i\frac{1}{d-1}\cdots\frac{i}{d-1}}\cdot
\frac{d^{da}(\frac{1}{d}+1)_a\cdots(\frac{i}{d}+1)_a(\frac{i+1}{d})_a\cdots(\frac{d-1}{d})_a}{(d-1)^{(d-1)a}(\frac{1}{d-1}+1)_a\cdots(\frac{i}{d-1}+1)_a(\frac{i+1}{d-1})_a\cdots(\frac{d-2}{d-1})_a}.$$

We have just proved:

\begin{theorem}
Let $\alpha^{(0)}=\left(\frac{1}{d},\dots,\frac{d-1}{d}\right)$ and $\beta^{(0)}=\left(\frac{1}{d-1},\dots,\frac{d-2}{d-1}\right)$.

$$N_{\F_p}(\lambda)-N_{\F_p}(0)\equiv\sum_{i=0}^{d-2}\left[{}_{d}F_{d-1}(\alpha^{(i)};\beta^{(i)}|(d-1)^{-(d-1)}\lambda^{-d})\right]_{\frac{i(p-1)}{d-1}}^{\frac{(i+1)(p-1)}{d}-1}\bmod p,$$

\noindent where $\alpha^{(i)}=(\alpha_1+1,\dots,\alpha_i+1,\alpha_{i+1},\dots,\alpha_{d-1})$, and $\beta^{(i)}=(\beta_1+1,\dots,\beta_i+1,\beta_{i+1},\dots,\beta_{d-2})$, that is we add 1 to the numerator and denominator parameters up to the $i$-th place.
\label{T:mine}
 \end{theorem}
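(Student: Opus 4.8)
The plan is to start from Lemma \ref{L:mine}, which already expresses $N_{\F_p}(\lambda)-N_{\F_p}(0)$ as a single sum over $a=0,\dots,p-2$ of a ratio of $p$-adic gamma values weighted by a power $(-p)^{\eta(a)}$. The first thing I would exploit is that the exponent $\eta(a)=\frac{a}{p-1}+\{\frac{(d-1)a}{p-1}\}-\{\frac{da}{p-1}\}$ is precisely $\mathcal{L}(\frac{a}{p-1})$, where $\mathcal{L}$ is the Landau function of the weight system $\gamma=[d]-[1]-[d-1]$ attached to the coefficients $\binom{dn}{n}$. Since these coefficients are integers, Landau's proposition forces $\mathcal{L}\ge 0$, and property \ref{P:Landau}.2 says $\mathcal{L}$ is integer-valued; hence $(-p)^{\eta(a)}$ is a $p$-adic unit exactly when $\mathcal{L}(\frac{a}{p-1})=0$ and is divisible by $p$ otherwise. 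Reducing modulo $p$ therefore annihilates every term with $\mathcal{L}>0$, and I only need to analyze the indices on which $\mathcal{L}$ vanishes.

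Next I would locate these indices exactly. Because the parameters $\frac{1}{d},\frac{1}{d-1},\frac{2}{d},\dots,\frac{d-1}{d}$ interlace, property \ref{P:Landau}.1 shows that $\mathcal{L}(x)=0$ precisely on the $d-1$ half-open intervals $[\frac{i}{d-1},\frac{i+1}{d})$, $i=0,\dots,d-2$ (with $\frac{0}{d-1}=0$). Setting $x=\frac{a}{p-1}$, the surviving indices are $\frac{(p-1)i}{d-1}\le a<\frac{(p-1)(i+1)}{d}$, and on each such block I can read off the fractional parts explicitly as $\{\frac{(d-1)a}{p-1}\}=\frac{(d-1)a}{p-1}-i$ and $\{\frac{da}{p-1}\}=\frac{da}{p-1}-i$, so the gamma arguments simplify to $\frac{a}{p-1}$, $\frac{(d-1)a}{p-1}-i$, and $\frac{da}{p-1}-i$.

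With the unit factor $(-p)^0=1$ gone, I would reduce each surviving term modulo $p$. Using $\frac{1}{p-1}\equiv-1\bmod p$, the continuity of $\Gamma_p$ (so its residue mod $p$ depends only on the residue of its argument), and $\omega(x)\equiv x\bmod p$, a block becomes $\sum_a\frac{\Gamma_p(-a)\Gamma_p(-(d-1)a-i)}{\Gamma_p(-da-i)}(d\lambda)^{-da}$. Applying the reflection property \ref{P:gamma}.3 to flip each negative argument to a positive one, and then property \ref{P:gamma}.1 to turn $\Gamma_p(n+1)$ into $\pm n!$, collapses this into $\sum_a\binom{da+i}{a}(d\lambda)^{-da}$. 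This is the step I expect to demand the most care: one must check that the $(-1)^{R(\cdot)}$ factors from \ref{P:gamma}.3 and the signs from \ref{P:gamma}.1 cancel to leave a clean $+$, and that the passage from $\Gamma_p$ to ordinary factorials is legitimate even though $da+i$ may exceed $p$. The latter is licensed exactly by the fact that on the surviving blocks the binomial $\binom{da+i}{a}$ is a $p$-adic unit, which reflects the same vanishing-of-valuation condition $\mathcal{L}=0$ recorded by the valuation formula $v_p(u_n)=\sum_k\mathcal{L}(n/p^k)$.

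Finally I would repackage the binomial as a hypergeometric term. Writing $\binom{da+i}{a}=\frac{(da+i)\cdots(da+1)}{((d-1)a+i)\cdots((d-1)a+1)}\binom{da}{a}$, expanding $\binom{da}{a}$ into the Pochhammer symbols $(\frac{j}{d})_a$ and $(\frac{j}{d-1})_a$, and using $(x+a)(x)_a=x\,(x+1)_a$ to absorb the head factors, the $i$ lowest numerator and denominator parameters each shift up by $1$ while the constant prefactor telescopes to $i!/i!=1$. This identifies the block with the partial sum of ${}_{d}F_{d-1}(\alpha^{(i)};\beta^{(i)}|(d-1)^{-(d-1)}\lambda^{-d})$ truncated to $\frac{i(p-1)}{d-1}\le a\le\frac{(i+1)(p-1)}{d}-1$, and summing over $i=0,\dots,d-2$ gives the stated formula. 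What remains is pure bookkeeping: confirming the truncation limits and verifying that the shifts land precisely in the positions prescribed by $\alpha^{(i)}$ and $\beta^{(i)}$.
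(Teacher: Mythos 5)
Your proposal is correct and follows essentially the same route as the paper's own proof: starting from Lemma \ref{L:mine}, identifying $\eta(a)$ with the Landau function of $\gamma=[d]-[1]-[d-1]$, using the interlacing of parameters and Proposition \ref{P:Landau} to isolate the surviving blocks $\frac{(p-1)i}{d-1}\le a<\frac{(p-1)(i+1)}{d}$, reducing mod $p$ via continuity of $\Gamma_p$ and properties \ref{P:gamma}.3 and \ref{P:gamma}.1 to obtain $\binom{da+i}{a}(d\lambda)^{-da}$, and then shifting Pochhammer symbols to land on the $\alpha^{(i)},\beta^{(i)}$ parameters. If anything, you are more careful than the paper, which silently passes over the sign bookkeeping in the reflection step and the conversion of $\Gamma_p(da+i+1)$ to a factorial when $da+i$ may exceed $p$ --- points you explicitly flag.
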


\begin{notation}
$[(u(z)]_i^j$ denotes the polynomial which is the truncation of a series $u(z)$ from $n=i$ to $j$.
\end{notation}

So for example in the case $d=3$ we get that
\begin{align*}
N_{\F_p}(\lambda)-N_{\F_p}(0)\equiv&\left[{}_2F_1\left(\left.\frac{1}{3},\frac{2}{3};\frac{1}{2}\right|\frac{1}{2^2\lambda^3}\right)\right]_0^{\frac{p-1}{3}-1}\\
&+\left[{}_2F_1\left(\left.\frac{4}{3},\frac{2}{3};\frac{3}{2}\right|\frac{1}{2^2\lambda^3}\right)\right]_{\frac{p-1}{2}}^{\frac{2(p-1)}{3}-1}\bmod p.\\
\end{align*}

\begin{rmk}
Notice the difference between Theorem 5.4 and Igusa's result: in our situation, more than one hypergeometric function appears. As far as we know, most of the known examples that have been computed have coincided with Igusa in the sense that only one hypergeometric function appears modulo $p$. In the next two examples (the Dwork family), we will show a known computation using our methods, in which only one hypergeometric function appears.
\end{rmk}

\subsection{The Dwork family when $d=3$ (The elliptic curve case)}

Recall that we have three equivalence classes, $(0,0,0),(1,2,0),(2,1,0)$, and so we can split the sum into three sums (although since the last two are permutations of each other the sums will be the same), so we get:

\begin{align*}
N_{\F_p}(\lambda)-N_{\F_p}(0)=&\frac{1}{p-1}\sum_{s\in\frac{1}{p-1}\Z/\Z}\frac{g(s)^3}{g(3s)}\chi_{3s}(3\lambda)\\
& +\frac{2}{p-1}\sum_{s\in\frac{1}{p-1}\Z/\Z}\frac{g(s)g\left(s+\frac{1}{3}\right)g\left(s+\frac{2}{3}\right)}{g(3s)}\chi_{3s}(3\lambda)
\end{align*}

Again, before using the formula it is convenient to change the summation:

\begin{align*}&N_{\F_p}(\lambda)-N_{\F_p}(0)=\frac{1}{p-1}\sum_{s=0}^{p-2}\frac{g\left(\frac{s}{p-1}\right)^3}{g\left(\left\{\frac{3s}{p-1}\right\}\right)}\omega(3\lambda)^{-3s}
\\
&+\frac{2}{p-1}\sum_{s=0}^{p-2}\frac{g\left(\frac{s}{p-1}\right)g\left(\left\{\frac{s}{p-1}+\frac{1}{3}\right\}\right)g\left(\left\{\frac{s}{p-1}+\frac{2}{3}\right\}\right)}{g\left(\left\{\frac{3s}{p-1}\right\}\right)}\omega(3\lambda)^{-3s}\end{align*}

Substituting and simplifying, we get that
\begin{align*}
&N_{\F_p}(\lambda)-N_{\F_p}(0)=\frac{1}{p-1}\sum_{s=0}^{p-2}\frac{(-p)^{\left(\frac{3s}{p-1}-\left\{\frac{3s}{p-1}\right\}\right)}
\Gamma_p\left(\frac{s}{p-1}\right)^3}{\Gamma_p\left(\left\{\frac{3s}{p-1}\right\}\right)}\omega(3\lambda)^{-3s}+\\
&+\frac{2}{p-1}\sum_{s=0}^{p-2}\frac{(-p)^{\gamma(s)}
\Gamma_p\left(\frac{s}{p-1}\right)\Gamma_p\left(\left\{\frac{s}{p-1}+\frac{1}{3}\right\}\right)\Gamma_p\left(\left\{\frac{s}{p-1}+\frac{2}{3}\right\}\right)}
{\Gamma_p\left(\left\{\frac{3s}{p-1}\right\}\right)}\omega(3\lambda)^{-3s},
\end{align*} where $\gamma(s)={\left(\frac{s}{p-1}+\left\{\frac{s}{p-1}+\frac{1}{3}\right\}+\left\{\frac{s}{p-1}+\frac{2}{3}\right\}-\left\{\frac{3s}{p-1}\right\}\right)}$.

 Once more, the power of $p$ in the first part of the sum is determined by $\mathcal{L}\left(\frac{s}{p-1}\right)$ where $\mathcal{L}(x)$ is the Landau function associated to the hypergeometric weight system $[3]-3[1]$. The discontinuities are $0,1/3,2/3$ and the function is zero only when $0\leq x<1/3$. So $\bmod p$ we get

 \[N_{\F_p}(\lambda)-N_{\F_p}(0)\equiv-\sum_{s=0}^{\frac{p-1}{3}-1}\frac{\Gamma_p(-s)^3}{\Gamma_p(-3s)}(3\lambda)^{-3s}\bmod p\]
 \[\equiv -\sum_{s=0}^{\frac{p-1}{3}-1}\frac{\Gamma_p(1+3s)}{\Gamma_p(1+s)^3}(3\lambda)^{-3s}\bmod p\]
\[\equiv -\sum_{s=0}^{\frac{p-1}{3}-1}\frac{(3s)!}{s!^3}(3\lambda)^{-3s}\bmod p\]
\[\equiv-\left[{}_2F_1\left(\left.\frac{1}{3},\frac{2}{3};1\right|\lambda^{-3}\right)\right]_0^{\frac{p-1}{3}-1}\bmod p.\]

\subsection{The Dwork family when $d=4$ (The $K3$-surface case)}

This is the case

\[X_{\lambda}:x_1^4+x_2^4+x_3^4+x_4^4-4\lambda x_1x_2x_3x_4=0\]

In other words, the family with $d=4=n, h=(1,1,1,1)$.

The set $W$ is made up of 64 vectors, but we can split them up into 16 orbits, and of those there are only three "types". These are
\[(0,0,0,0),(1,1,1,1),(2,2,2,2),(3,3,3,3)\]
\[(0,1,1,2),(1,2,2,3),(2,3,3,0),(3,0,0,1)\]
\[(0,0,2,2),(1,1,3,3),(2,2,0,0),(3,3,1,1)\]

The rest are permutations of these. So there is one orbit of the first type, 12 orbits of the second type, and 3 orbits of the third type.

This makes the formula look as follows:

\[N_{\F_p}(\lambda)-N_{\F_p}(0)=\frac{1}{p-1}\sum_{s\in\frac{1}{p-1}\Z/\Z}\frac{g(s)^4}{g(4s)}\chi_{4s}(4\lambda)\hspace{1cm} (S_1)\]
\[+\frac{12}{p-1}\sum_{s\in\frac{1}{p-1}\Z/\Z}\frac{g(s)g(s+\frac{1}{4})^2g(s+\frac{1}{2})}{g(4s)}\chi_{4s}(4\lambda)\hspace{1cm} (S_2)\]
\[+\frac{3}{p-1}\sum_{s\in\frac{1}{p-1}\Z/\Z}\frac{g(s)^2g(s+\frac{1}{2})^2}{g(4s)}\chi_{4s}(4\lambda).\hspace{1cm} (S_3)\]

Let's focus on the first term of the sum, denoted by $S_1$. Using Gross-Koblitz we get

\[S_1=\frac{1}{p-1}\sum_{s=0}^{p-2}\frac{(-p)^{(\frac{4s}{p-1}-\left\{\frac{4s}{p-1}\right\})}\Gamma_p\left(\frac{s}{p-1}\right)^4}{\Gamma_p\left(\left\{\frac{4s}{p-1}\right\}\right)}\omega(4\lambda)^{-4s}\]

By inspecting the power of $-p$ we can see that again it is determined by $\mathcal{L}_{\gamma}$ where $\gamma=[4]-4[1]$. Thus, the only terms that survive mod $p$ are those for which $0\leq s<\frac{p-1}{4}$. So

\[S_1\equiv-\sum_{s=0}^{\frac{p-1}{4}-1}\frac{\Gamma_p(-s)^4}{\Gamma_p(-4s)}(4\lambda)^{-4s}\bmod p\]

\[\equiv \sum_{s=0}^{\frac{p-1}{4}-1}\frac{\Gamma_p(1+4s)}{\Gamma_p(1+s)^4}(4\lambda)^{-4s}\bmod p\]
\[\equiv \sum_{s=0}^{\frac{p-1}{4}-1} \frac{(4s)!}{(s!)^4}(4\lambda)^{-4s}\bmod p\]
\[\equiv \left[{}_3F_2\left(\left.\frac{1}{4},\frac{1}{2},\frac{3}{4};1,1\right|\lambda^{-4}\right)\right]_0^{\frac{p-1}{4}-1}\bmod p.\]

Inspection shows that $S_2$ and $S_3$ are both zero modulo $p$.

\begin{rmk}Notice that in both the $d=3$ and $d=4$ examples, the only terms to survive mod $p$ are the ones related to the class of $(0,\dots,0)$. Clearly some information is lost that might not be lost if we studied these cases modulo other powers of $p$. One of our future plans is to try using the Gross-Koblitz formula for the more general finite fields to compute these examples. In the case of the elliptic curve, we believe $p^3$ will be the right power, and we expect that for any $d$, we should study the number of solutions modulo $p^d$. This was actually checked by Rodr\'{i}guez-Villegas, Candelas and de la Ossa for $d=5$ in \cite{frv:candelas2}.
\end{rmk}

\subsection*{Acknowledgements:} This paper is based on some results obtained in the author's Ph.D. thesis. As such, the author would primarily like to thank her thesis advisor, Fernando Rodr\'{i}guez-Villegas, for his guidance, support and great ideas. The author would also like to thank, in no particular order, Bjorn Poonen, Benjamin Brubaker, and Catherine Lennon for many valuable conversations at the time this paper was being written. 


\begin{thebibliography}{HD}

\normalsize
\baselineskip=17pt


\bibitem{frv:candelas2} P. Candelas; X. de la Ossa; F. Rodr\'{i}guez-Villegas, \emph{{C}alabi-{Y}au Manifolds over Finite Fields {II}}, Toronto 2001, Calabi-Yau Varieties and Mirror Symmetry, hep-th/0402133, 121--157. 

 \bibitem{clemens} C. H. Clemens, \emph{A Scrapbook of Complex Curve Theory}, Plenum Press, New York, 1980.

\bibitem{dwork:padic} B. Dwork, \emph{$p$-adic cycles}, Pub. math. de l'I.H.\'{E}.S. 37 (1969), 27--115.

 \bibitem{igusa} Jun-Ichi Igusa, \emph{Class Number of a Definite Quaternion with Prime Discriminant}, Proc. of the Nat. Acad. Sci. U.S.A. 44 (1958), 312--314. 

\bibitem{ireland} K. Ireland and M. Rosen, \emph{A Classical Introduction to Modern Number Theory}, 2nd ed., Springer-Verlag, 1990.

\bibitem{katz:esde} N. M. Katz, \emph{Exponential Sums and Differential Equations}, Princeton University Press, Princeton, NJ, 1990. 


\bibitem{kloost} R. Kloosterman, \emph{The zeta function of monomial deformations of {F}ermat hypersurfaces}, Algebra \& Number Theory 1 (2007), 421--450.

\bibitem{koblitz} N. Koblitz, \emph{The number of points on certain families of hypersurfaces over finite fields}, Compos. Math. 48 (1983), 3--23.


\bibitem{lennon} C. Lennon,
	\emph{Gaussian Hypergeometric Evaluations of Traces of Frobenius for Elliptic Curves},
	Proc. Amer. Math. Soc., 	arXiv:1003.4421v1 (to appear).
	
  \bibitem{ono} K. Mahlburg and K. Ono, \emph{Arithmetic of certain hypergeometric modular forms}, Acta Arithmetica 113 (2004), 39--55.  

	
\bibitem{robert} A. M. Robert, \emph{A Course in $p$-adic Analysis}, Springer-Verlag, 2000.
    
\bibitem{robgamma} A. M. Robert, \emph{The {G}ross-{K}oblitz Formula Revisited}, Rend.  Sem. Mat. Univ. Padova 105 (2001), 157--170. 
	
\bibitem{frv} F. Rodr\'{i}guez-Villegas, \emph{Hypergeometric Families of {C}alabi-{Y}au Manifolds}, Fields Inst. Commun. 38 (2003).



\bibitem{salerno} A. Salerno, \emph{Hypergeometric Functions in Arithmetic Geometry}, Ph.D. Thesis, University of Texas at Austin, 2009. 
	

\bibitem{slater} L. J. Slater, \emph{Generalized Hypergeometric Functions}, Cambridge Univeristy Press, Cambridge, 1966.
	
   
\bibitem{weil} A. Weil, \emph{Numbers of solutions of equations in finite fields}, Bull.  Amer. Math. Soc. 55 (1949), 497--508. 
    
    

    
    

   

    

    

    

        

   
 




\end{thebibliography}

\end{document}